\newcommand{\Reals}{\mathbb{R}}
\newcommand{\Int}{\mathbb{Z}}
\newcommand{\rme}{\mathrm{e}}
\newcommand{\rmi}{\mathrm{i}}
\newcommand{\rmF}{\mathrm{F}}
\newcommand{\abs}[1]{|{#1}|}
\newcommand{\bigabs}[1]{\bigl|{#1}\bigr|}
\newcommand{\Bigparen}[1]{\Bigl({#1}\Bigr)}
\newcommand{\bigbracket}[1]{\bigl[{#1}\bigr]}
\newcommand{\set}[1]{\{{#1}\}}
\newcommand{\Bigset}[1]{\Bigl\{{#1}\Bigr\}}
\newcommand{\ip}[2]{\langle{#1},{#2}\rangle}
\newcommand{\bigip}[2]{\bigl\langle{#1},{#2}\bigr\rangle}
\DeclareMathOperator*{\argmax}{argmax}
\DeclareMathOperator*{\ssum}{sum}
\newtheorem{theorem}{Theorem}[section]
\newtheorem{corollary}[theorem]{Corollary}
\begin{document}

\title{Guaranteeing Convergence of Iterative Skewed Voting Algorithms\\ for Image Segmentation}

\author[db]{Doru C. Balcan\corref{cor1}}
\ead{dbalcan@cc.gatech.edu}

\author[gs]{Gowri Srinivasa}
\ead{gsrinivasa@pes.edu}

\author[mf]{Matthew Fickus}
\ead{Matthew.Fickus@afit.edu}

\author[jk]{Jelena Kova\v{c}evi\'{c}}
\ead{jelenak@cmu.edu}

\address[db]{School of Interactive Computing, Georgia Institute of Technology, Atlanta, USA}
\address[gs]{Dept. of Information Science and Engineering, and Center for Pattern Recognition, PES School of Engineering, Bangalore, India}
\address[mf]{Dept. of Mathematics and Statistics, Air Force Institute of Technology, Wright-Patterson AFB, USA}
\address[jk]{Dept. of Biomedical Eng., Electrical and Computer Eng. and Center for Bioimage Informatics, Carnegie Mellon University, Pittsburgh, USA}

\cortext[cor1]{Corresponding author}
\fntext[fn1]{Email: dbalcan@cc.gatech.edu. Mailing address: College of Computing Building, room 218, Georgia Institute of Technology, 801 Atlantic Drive, Atlanta, GA 30332-0280. Phone: (404) 385-8547, Fax: (404)894-0673.}

\begin{abstract}
In this paper we provide rigorous proof for the convergence of an iterative voting-based image segmentation algorithm called Active Masks. Active Masks (AM) was proposed to solve the challenging task of delineating punctate patterns of cells from fluorescence microscope images. Each iteration of AM consists of a linear convolution composed with a nonlinear thresholding; what makes this process special in our case is the presence of additive terms whose role is to "skew" the voting when prior information is available.  In real-world implementation, the AM algorithm always converges to a fixed point. We study the behavior of AM rigorously and present a proof of this convergence. The key idea is to formulate AM as a generalized (parallel) majority cellular automaton, adapting proof techniques from discrete dynamical systems.
\end{abstract}
\begin{keyword}
active masks, cellular automata, convergence, segmentation.
\end{keyword}
\maketitle

\section{Introduction}
\label{sec:intro}

Recently, a new algorithm called \textit{Active Masks} (AM) was proposed for the segmentation of biological images~\cite{Srinivasa:09}.  Let the ``image'' $f$ be any real-valued function over the domain $\Omega:=\prod_{d=1}^D\Int_{N_d}$ and refer to the $N:=N_1N_2{\ldots}N_D$ elements of $\Omega$ as \textit{pixels}; here, $\Int_{N_d}$ denotes the finite group of integers modulo $N_d$.  A \textit{segmentation} of $f$ assigns one of $M$ possible \textit{labels} to each of the $N$ pixels in $\Omega$.  For the fluorescence microscope image depicted in Figure~\ref{fig:AM}(a), one example of a successful segmentation is to label all of the background pixels as ``$1$,'' assign label ``$2$'' to every pixel in the largest cell, ``$3$'' to every pixel in the second largest cell, and so on.  Formally, a segmentation is a \textit{label function} $\psi:\Omega\rightarrow\{1,2,\dots,M\}$, or, equivalently,  a collection of $M$ binary \textit{masks} $\mu_m:\Omega\rightarrow\{0,1\}$ where, at any given $n\in\Omega$, we have $\mu_{m}(n)=1$ if and only if $\psi(n)=m$.  That is, $\mu_m$ at any iteration $i$ can be defined as 
\begin{equation*}
\label{eq:mu}
\mu_m^{(i)}:=\left\{\begin{array}{ll}1,&\psi_{i}(n)=m,\\0,&\psi_{i}(n)\neq m,\end{array}\right.
\end{equation*}
In AM, these masks actively evolve according to a given rule. To understand this evolution, it helps to first discuss \emph{iterative voting}: in each iteration, at any given pixel, one counts how often a given label appears in the neighborhood of that pixel---weighting nearby neighbors more than distant ones---and assigns the most frequent label to that pixel in the next iteration.  For example, if a pixel labeled ``$1$'' in the current iteration is completely surrounded by pixels labeled ``$2$'', its label will likely change to ``$2$'' in the next iteration.  Formally speaking, iterative voting is the repeated application of the rule:
\begin{equation}
\label{eq:ct}
\text{Iterative Voting:}
\qquad\psi_{i}(n)=\argmax\limits_{1\leq m\leq M}~\bigbracket{(\mu_{m}^{(i-1)}*g)(n)},
\end{equation}
where $i$ is the index of the iteration, $g:\Omega\rightarrow\mathbb{R}$ is some arbitrarily chosen fixed weighting function and ``$*$'' denotes circular convolution over $\Omega$.  Iterative voting is referred to as a \textit{convolution-threshold} scheme since it simplifies to rounding the filtered version of $\mu_1^{(i)}$ in the special case $M=2$.  Experimentation reveals that for typical low-pass filters $g$, repeatedly applying~\eqref{eq:ct} to a given initial $\psi_{0}$ results in a progressive smoothing of the contours between distinctly labeled regions of $\Omega$.  Despite this nice property, note that taken by itself, iterative voting is useless as a segmentation scheme, as~\eqref{eq:ct} evolves masks in a manner that is independent of any image under consideration.

The AM algorithm is a generalization of~\eqref{eq:ct} that contains additional image-based terms whose purpose is to drive the iteration towards a meaningful segmentation.  To be precise, the AM iteration is:
\begin{equation}
\label{eq:am}
\text{Active Masks:}
\qquad\psi_{i}(n)=\argmax\limits_{1\leq m\leq M}~\bigbracket{(\mu_{m}^{(i-1)}*g)(n)+R_{m}(n)},
\end{equation}
where the region-based distributing functions
 $\{R_m\}_{m=1}^M$ can be any image-dependent real-valued functions over $\Omega$.  
These will be referred to as {\em skew functions} in this paper, due to their role to bias the voting.
 Essentially, at any given pixel $n$, these additional terms skew the voting towards labels $m$ whose $R_m(n)$ values are large. For good segmentation, one should define the $R_m$'s in terms of features in the image that distinguish regions of interest from each other.

For example, for the fluorescence microscope image given in Figure~\ref{fig:AM}(a), the cells appear noticeably brighter than the background. As such, we choose $R_1$ to be a soft-thresholded version of the image's local average brightness, and choose the remaining $R_m$'s to be identically zero.  When \eqref{eq:am} is applied, such a choice in $R_m$'s forces pixels which lie outside the cells towards label ``$1$,'' while pixels that lie inside a cell can assume any other label.  Intuitively, repeated applications of~\eqref{eq:am} will cause the mask \smash{$\mu_1^{(i)}$} to converge to an indicator function of the background, while each of the other masks $\{\mu_m^{(i)}\}_{m=2}^{M}$ converges either to a smooth blob contained within the foreground or to the empty set.  Experimentation reveals that the AM algorithm indeed often converges to a $\psi$ which assigns a unique label to each cell provided the scale of the window $g$ is chosen appropriately~\cite{Srinivasa:09}; see Figure~\ref{fig:AM} for examples.
\begin{figure*}
\centering
\subfloat[Original image]{
\includegraphics[width=0.2\textwidth]{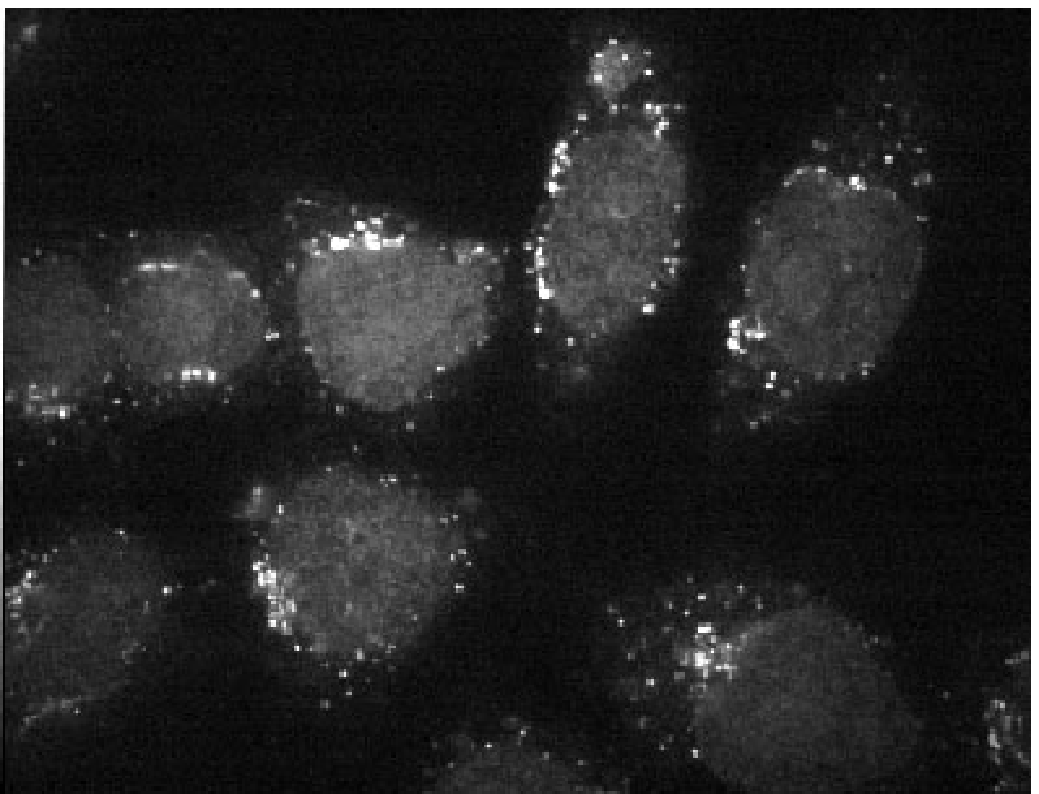}}
\qquad
\subfloat[$i$=0, M=256]{
\includegraphics[width=0.2\textwidth]{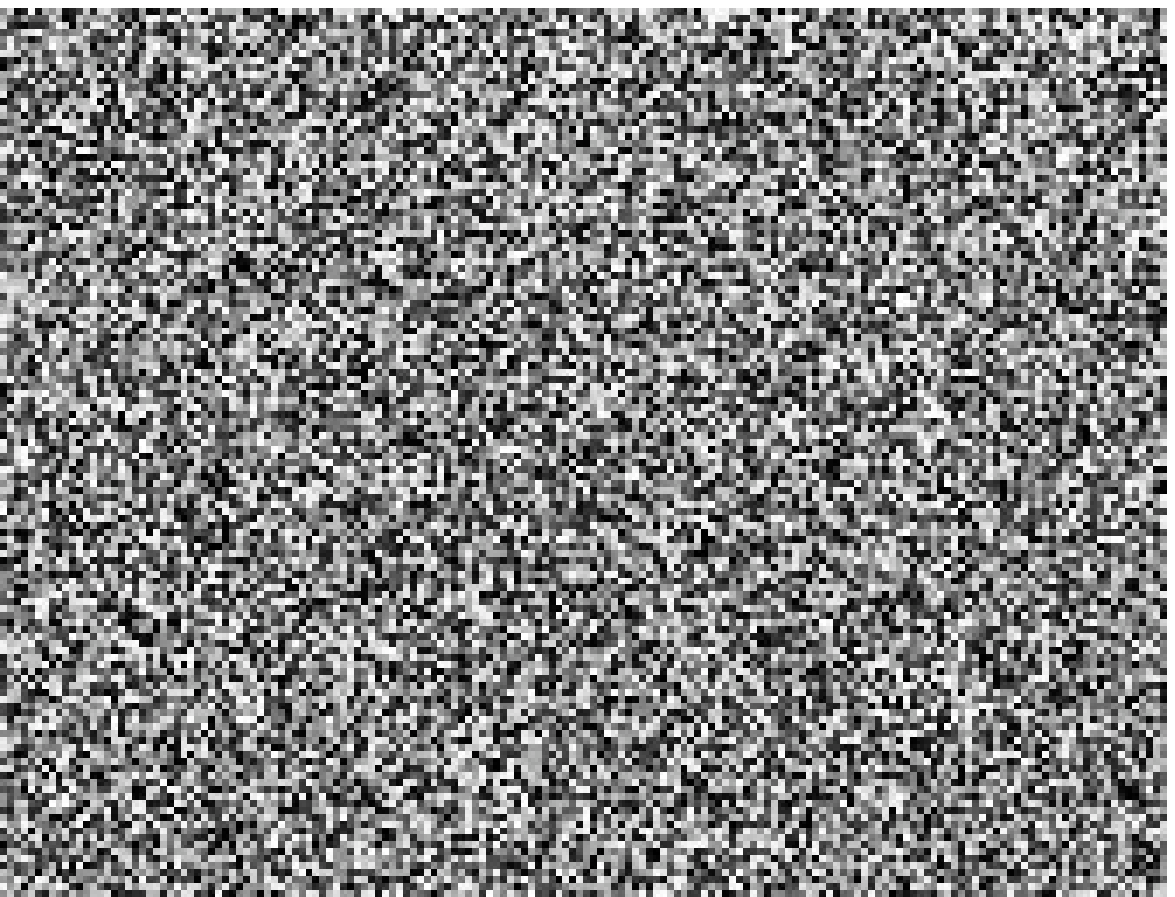}}\\
\subfloat[Segmentation outcomes for various scales]{
\centering
\begin{tabular}{rccc}
& scale$=4$
& scale$=16$
& scale$=32$\\
{Gaussian Filter}
&\includegraphics[height=0.05\textwidth,width=0.2\textwidth]{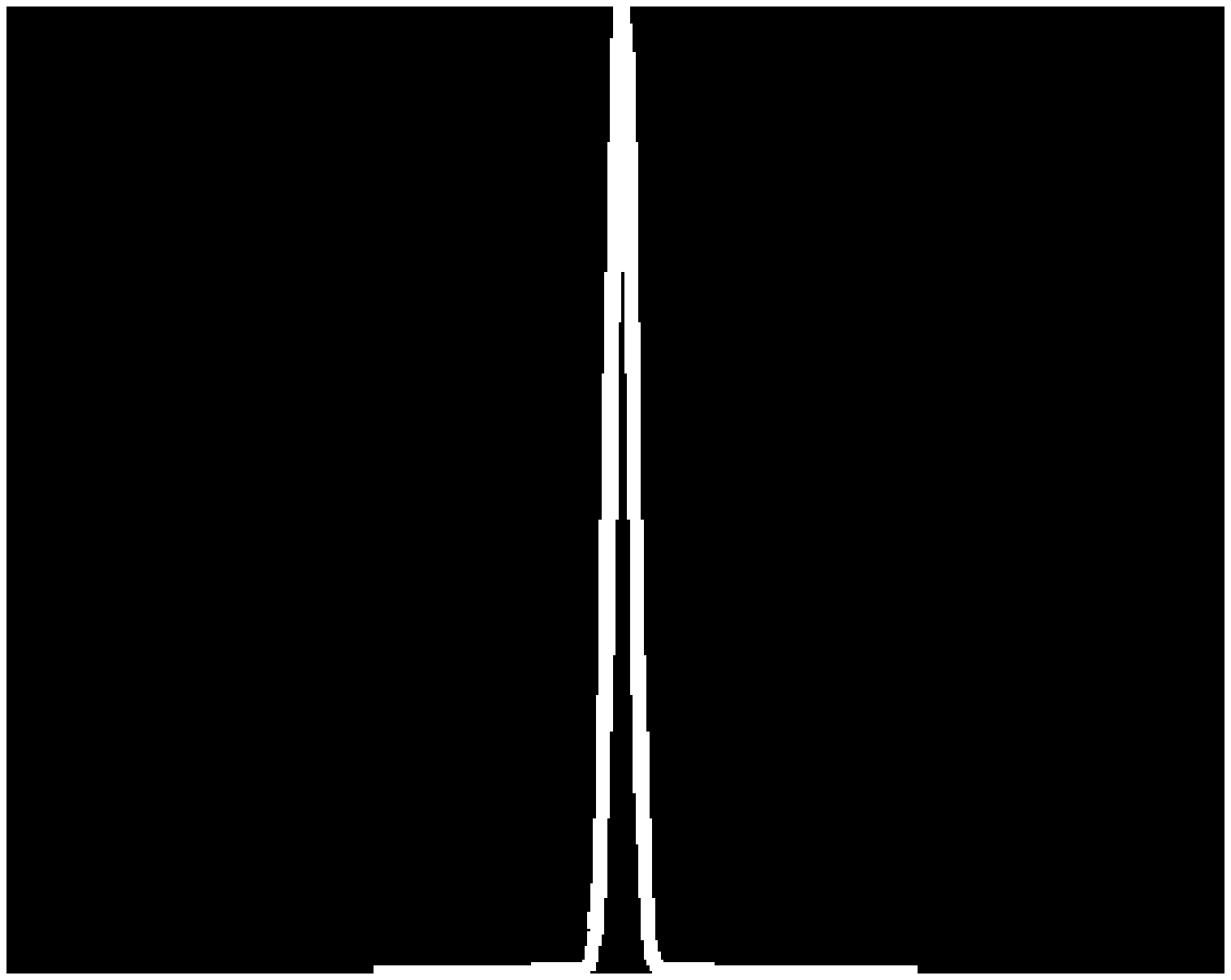}
&\includegraphics[height=0.05\textwidth,width=0.2\textwidth]{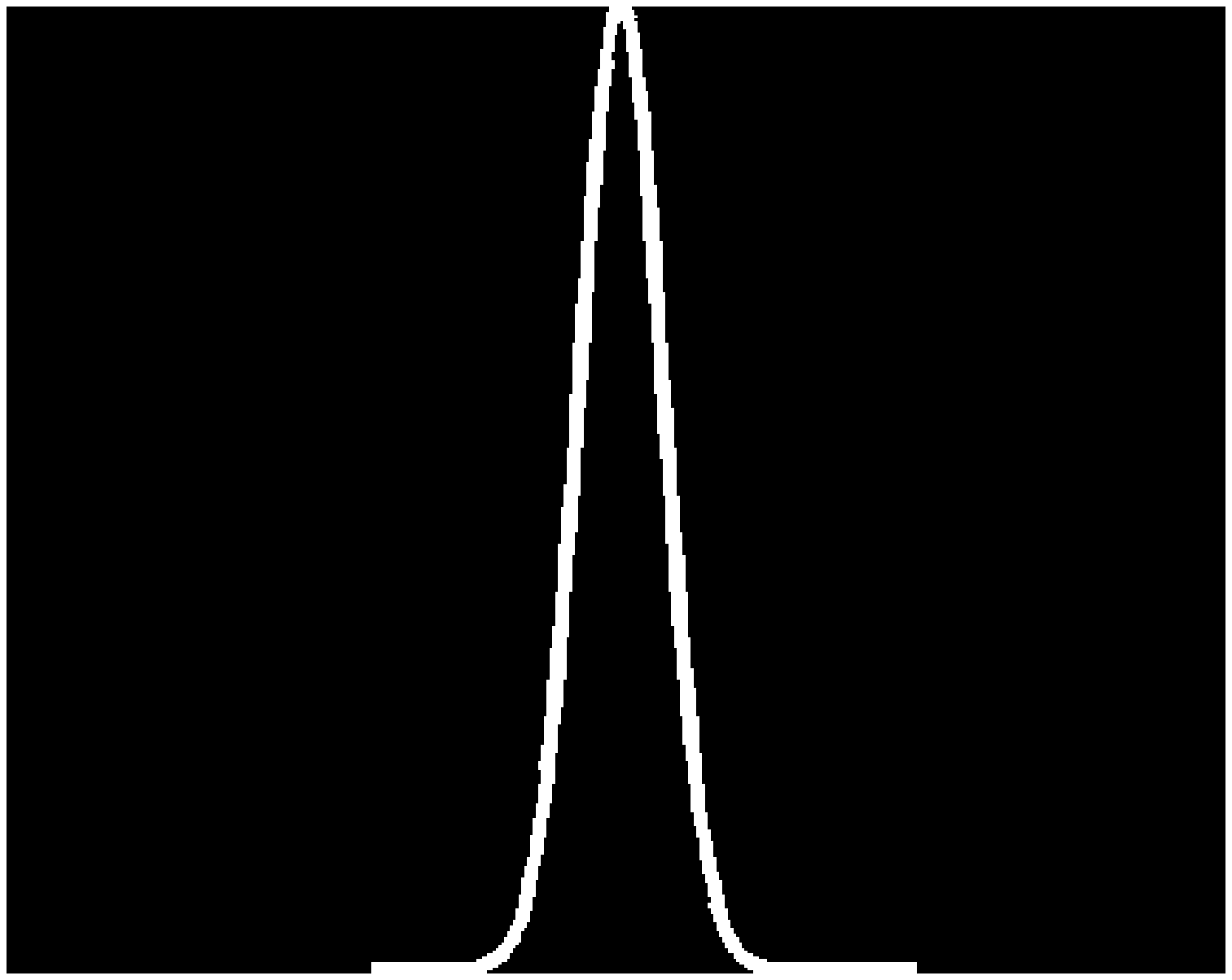}
&\includegraphics[height=0.05\textwidth,width=0.2\textwidth]{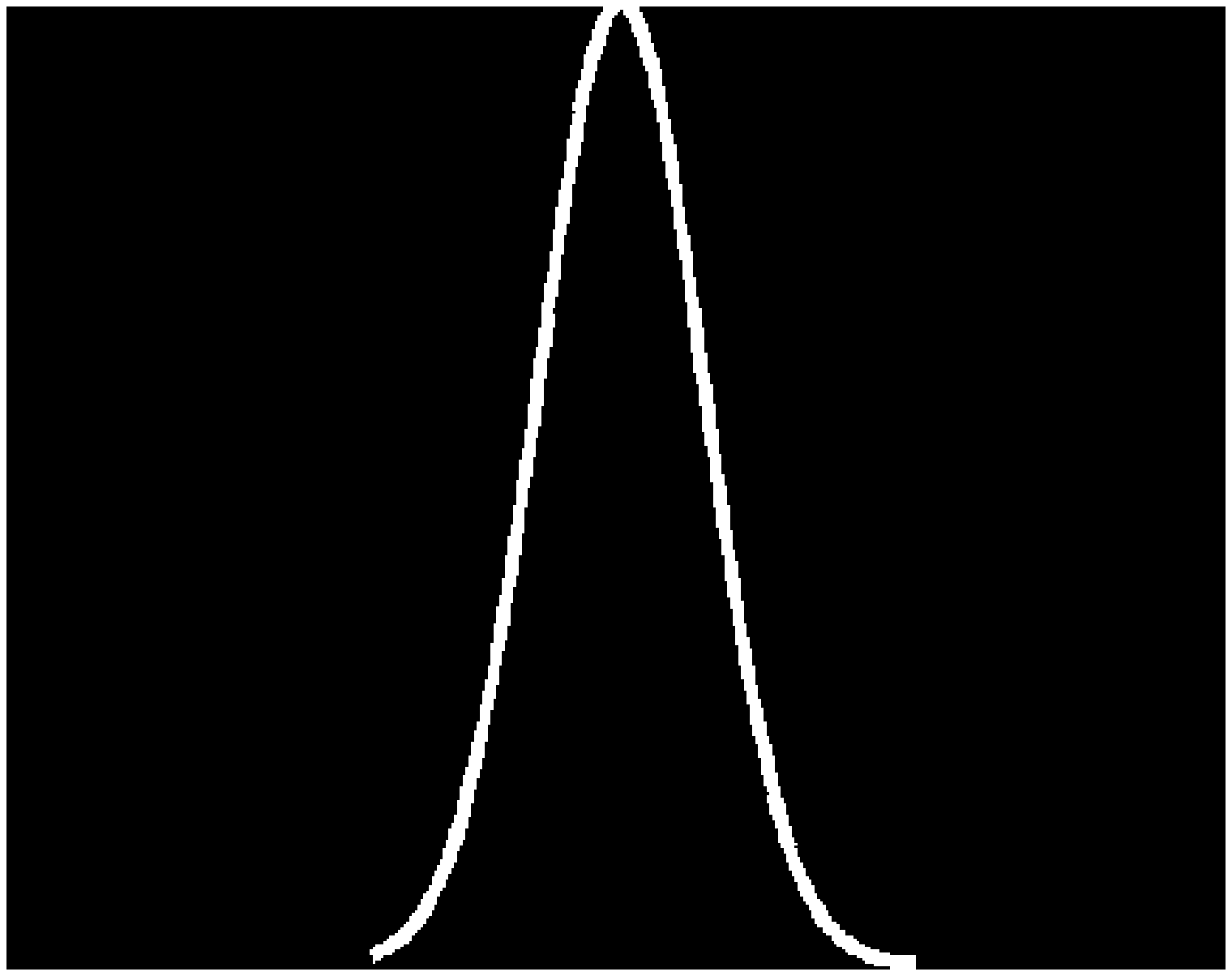}\\
{$i=2$}
&\includegraphics[width=0.2\textwidth]{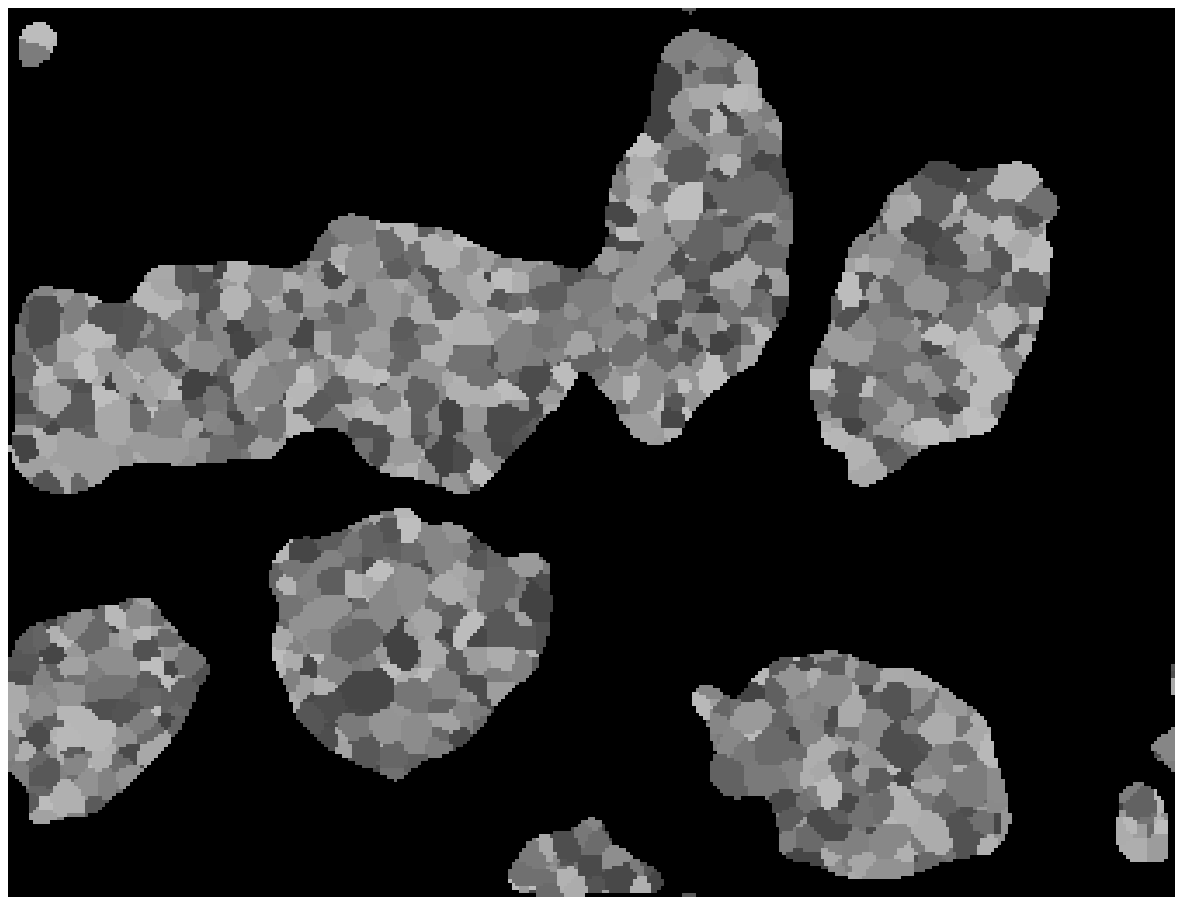}
&\includegraphics[width=0.2\textwidth]{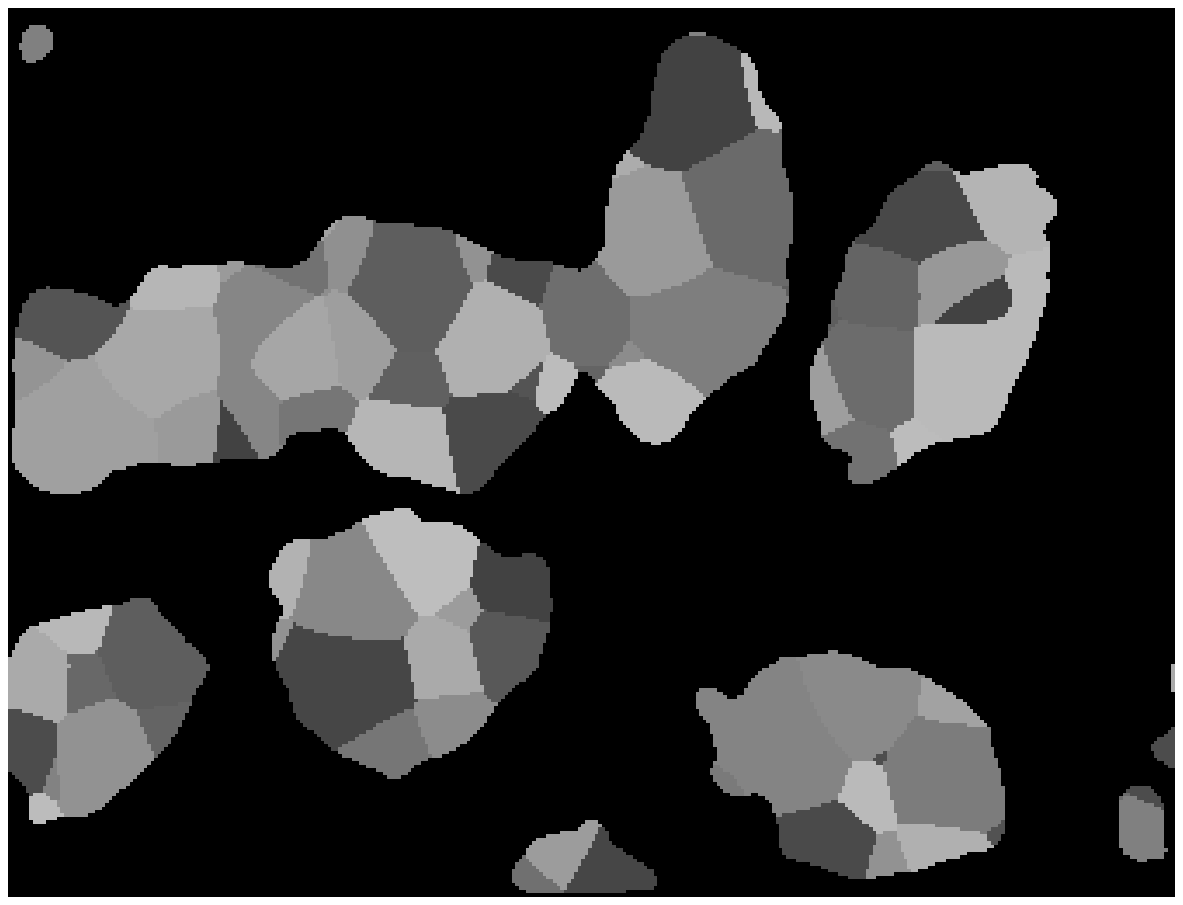}
&\includegraphics[width=0.2\textwidth]{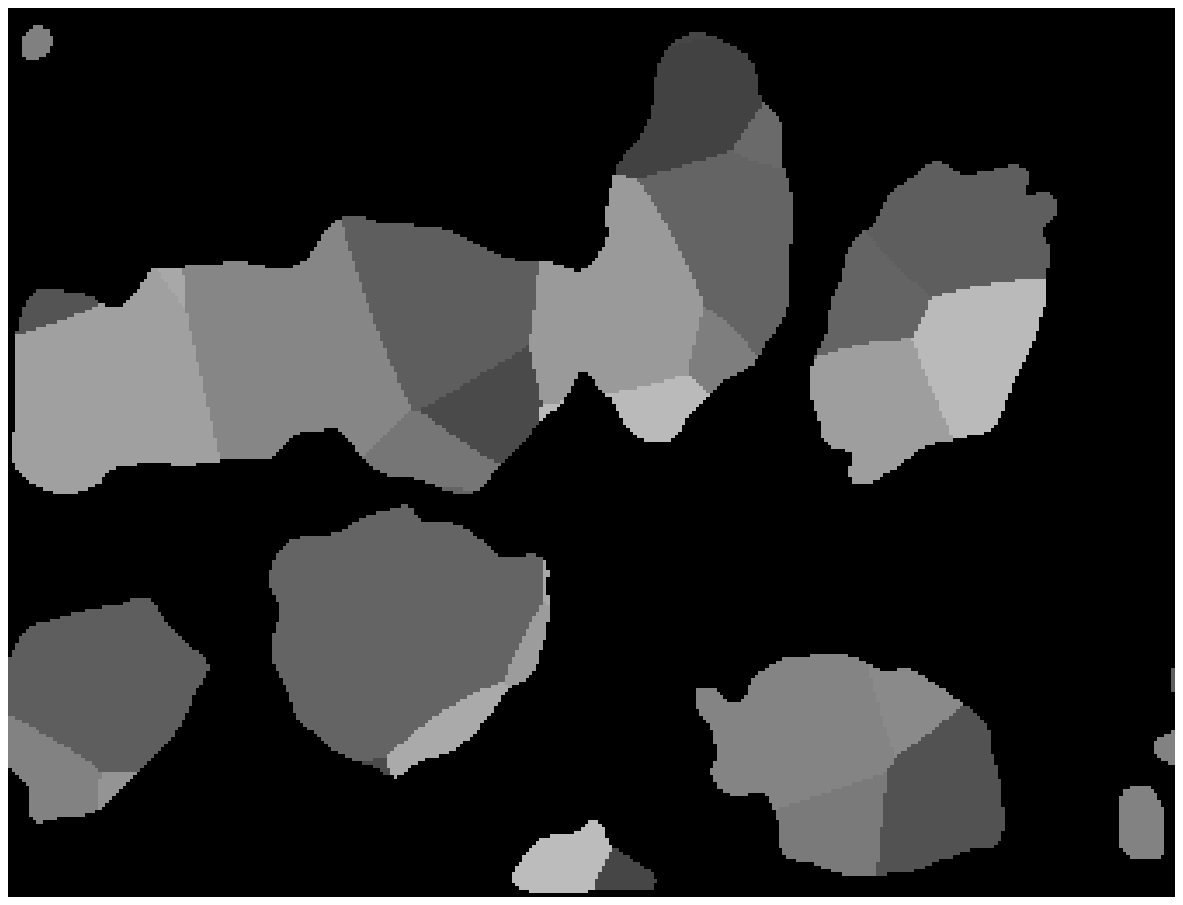}\\
{$i=8$}
& \includegraphics[width=0.2\textwidth]{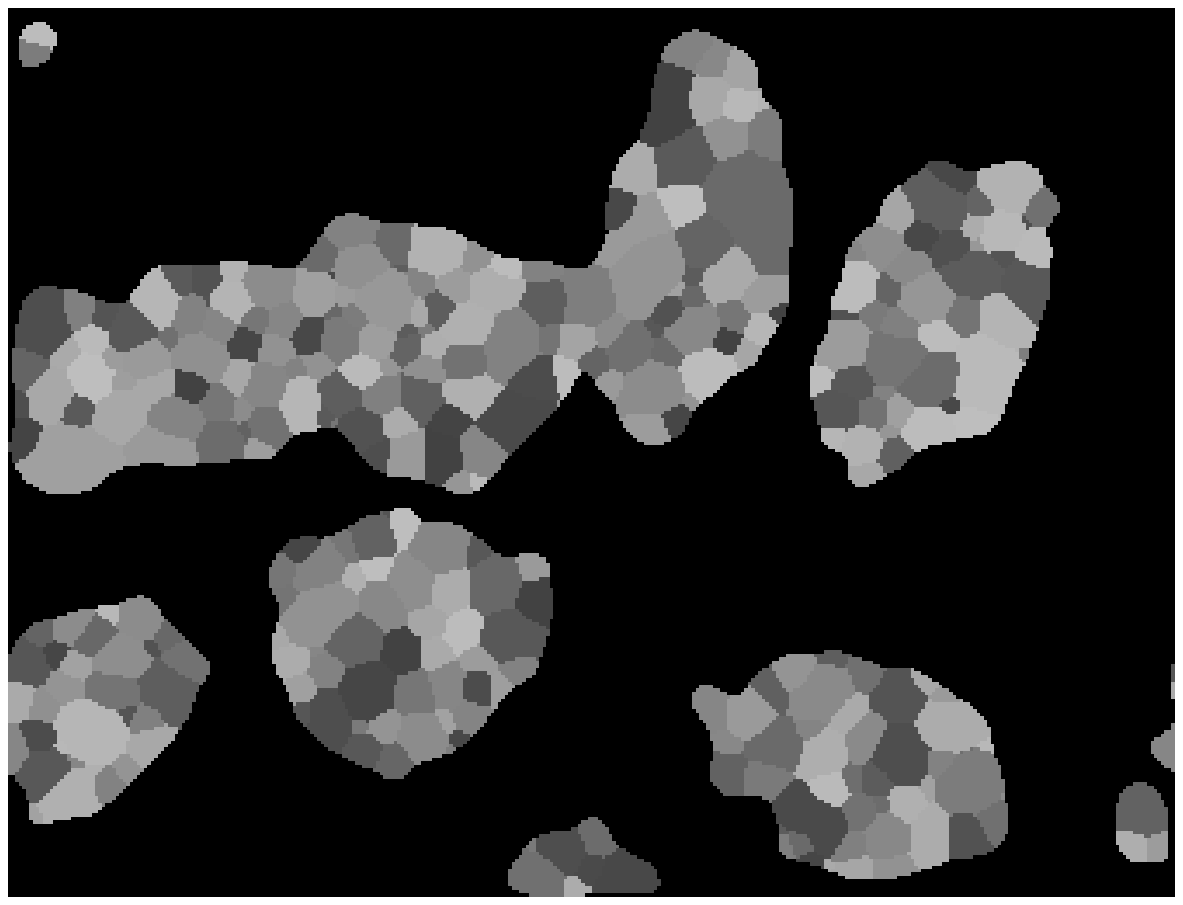}
& \includegraphics[width=0.2\textwidth]{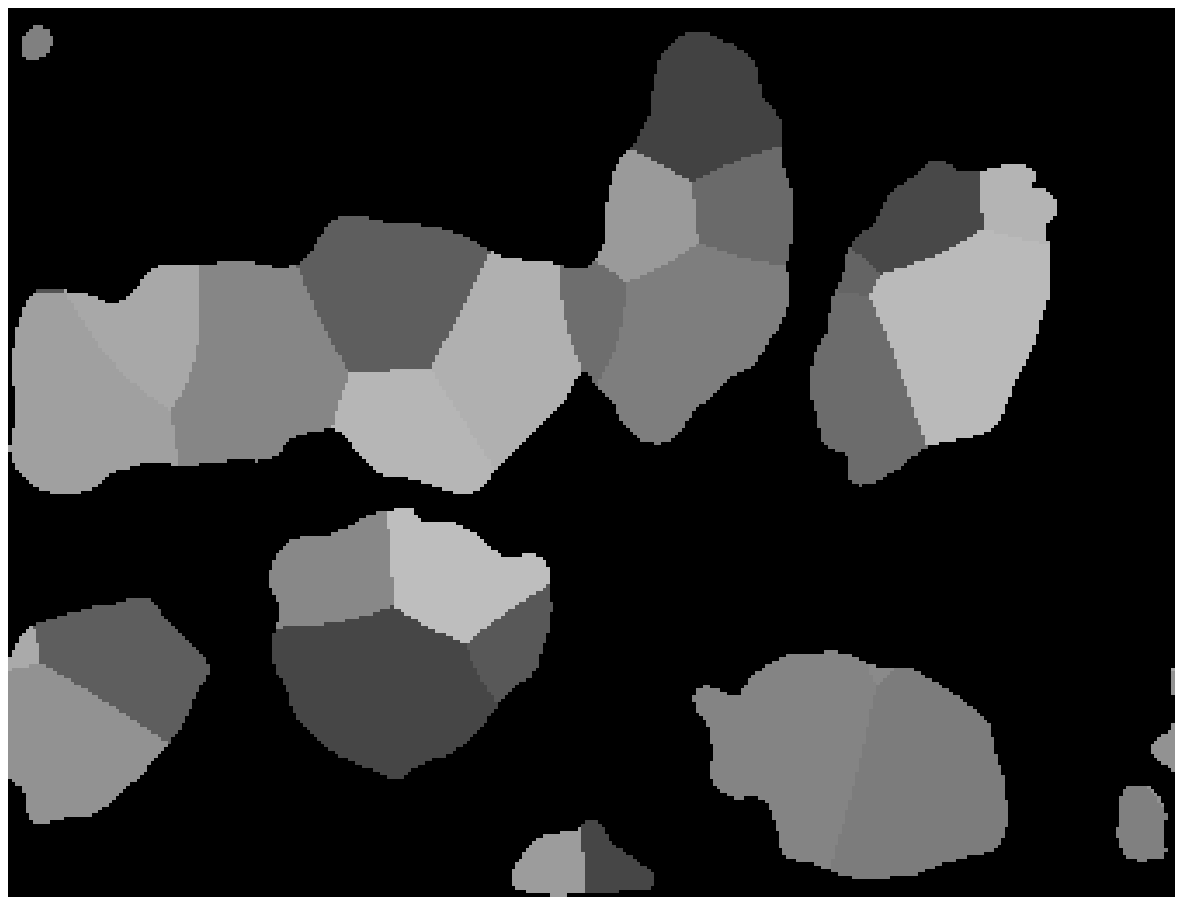}
& \includegraphics[width=0.2\textwidth]{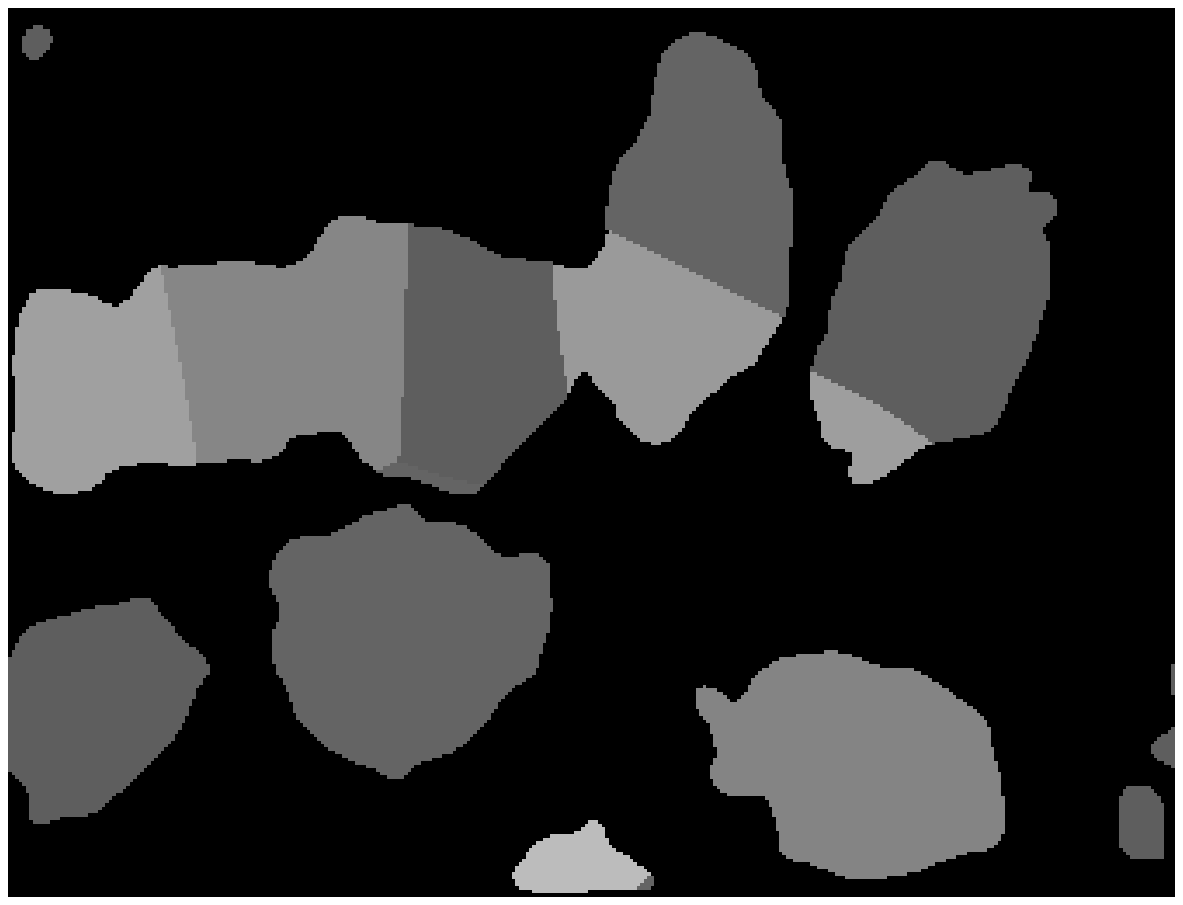}\\
{$i=14$}
& \includegraphics[width=0.2\textwidth]{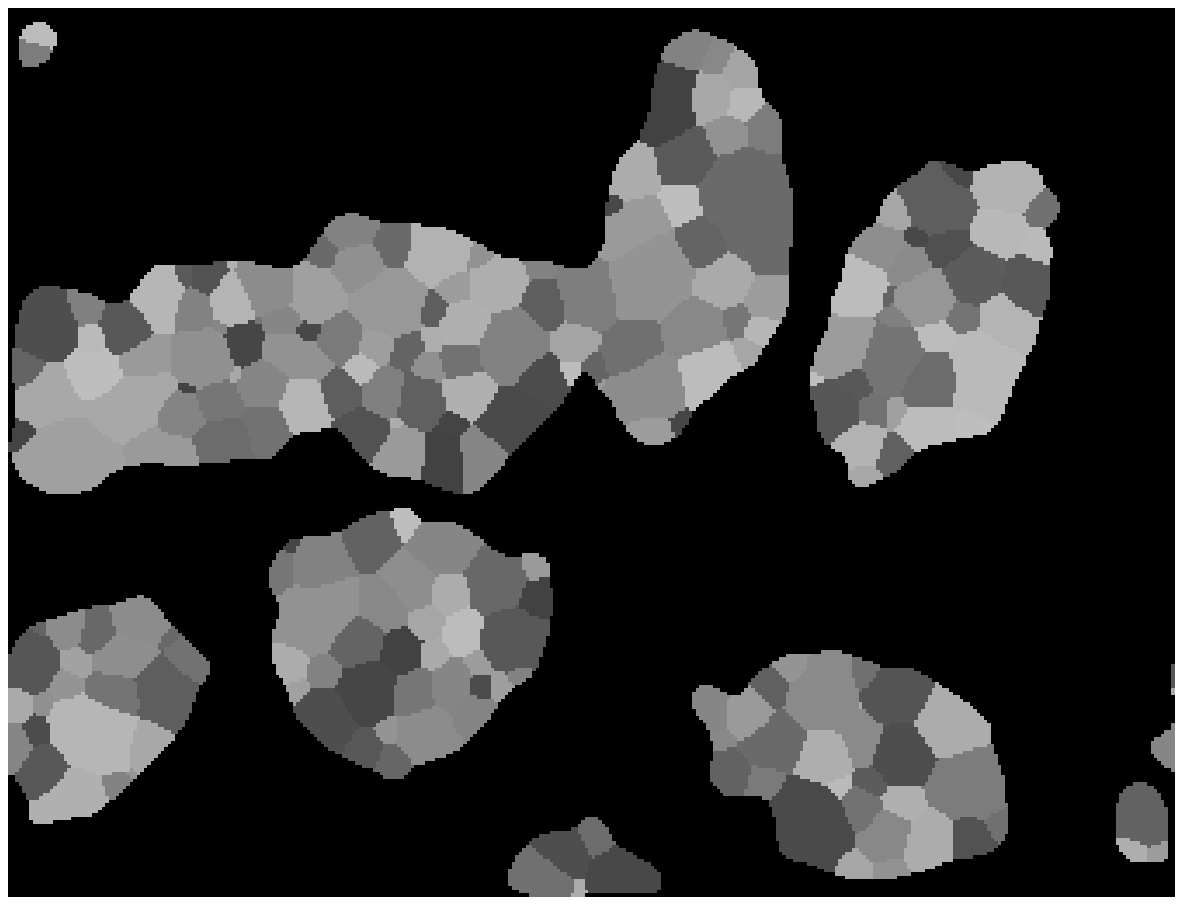}
& \includegraphics[width=0.2\textwidth]{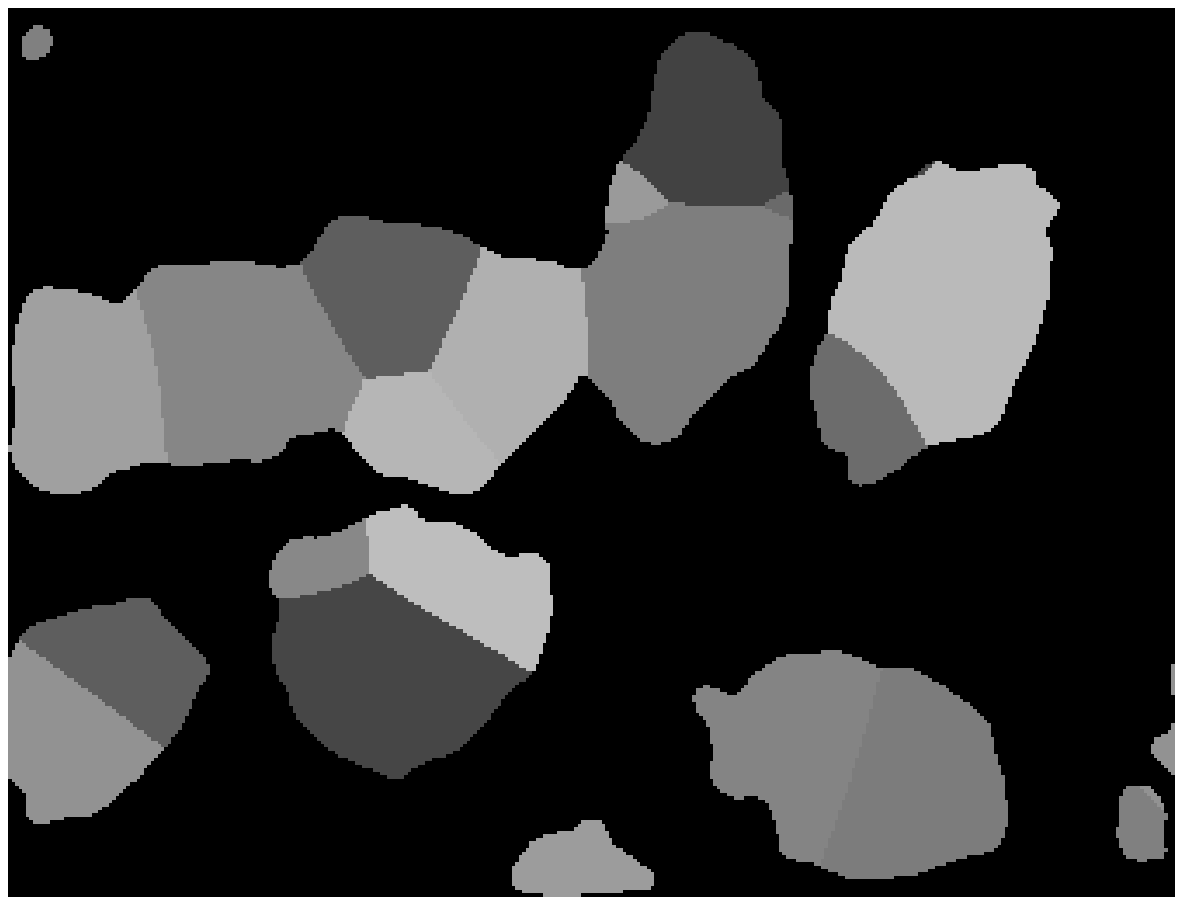}
& \includegraphics[width=0.2\textwidth]{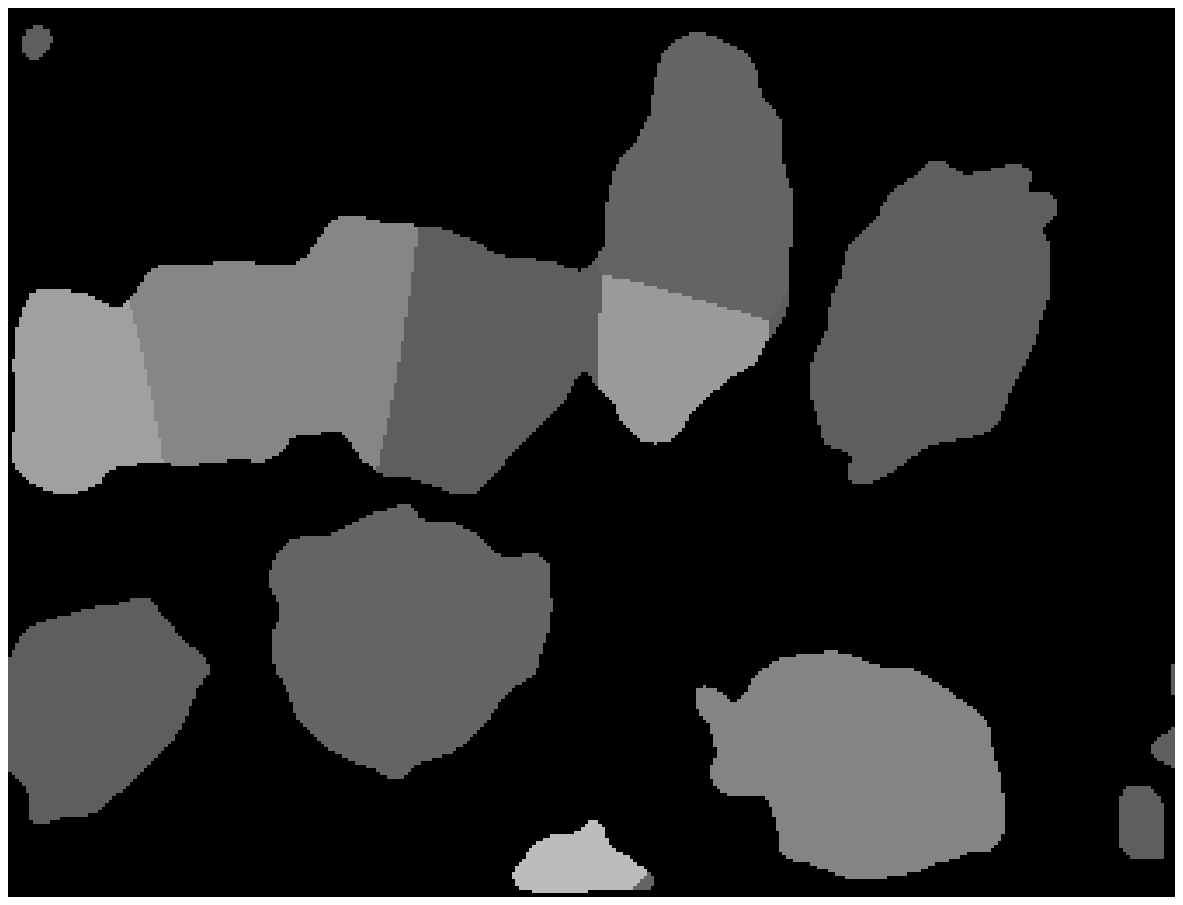}\\
Final state 
& \includegraphics[width=0.2\textwidth]{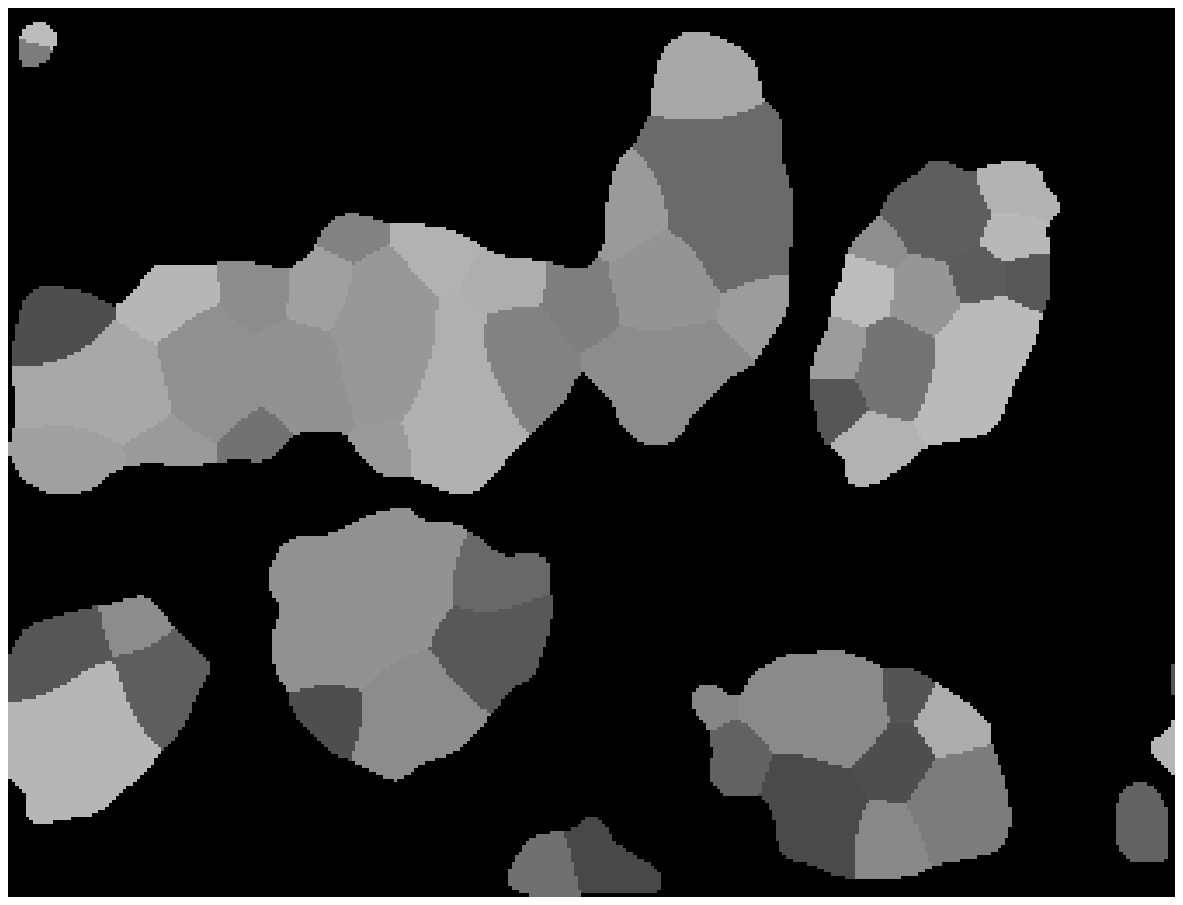}
& \includegraphics[width=0.2\textwidth]{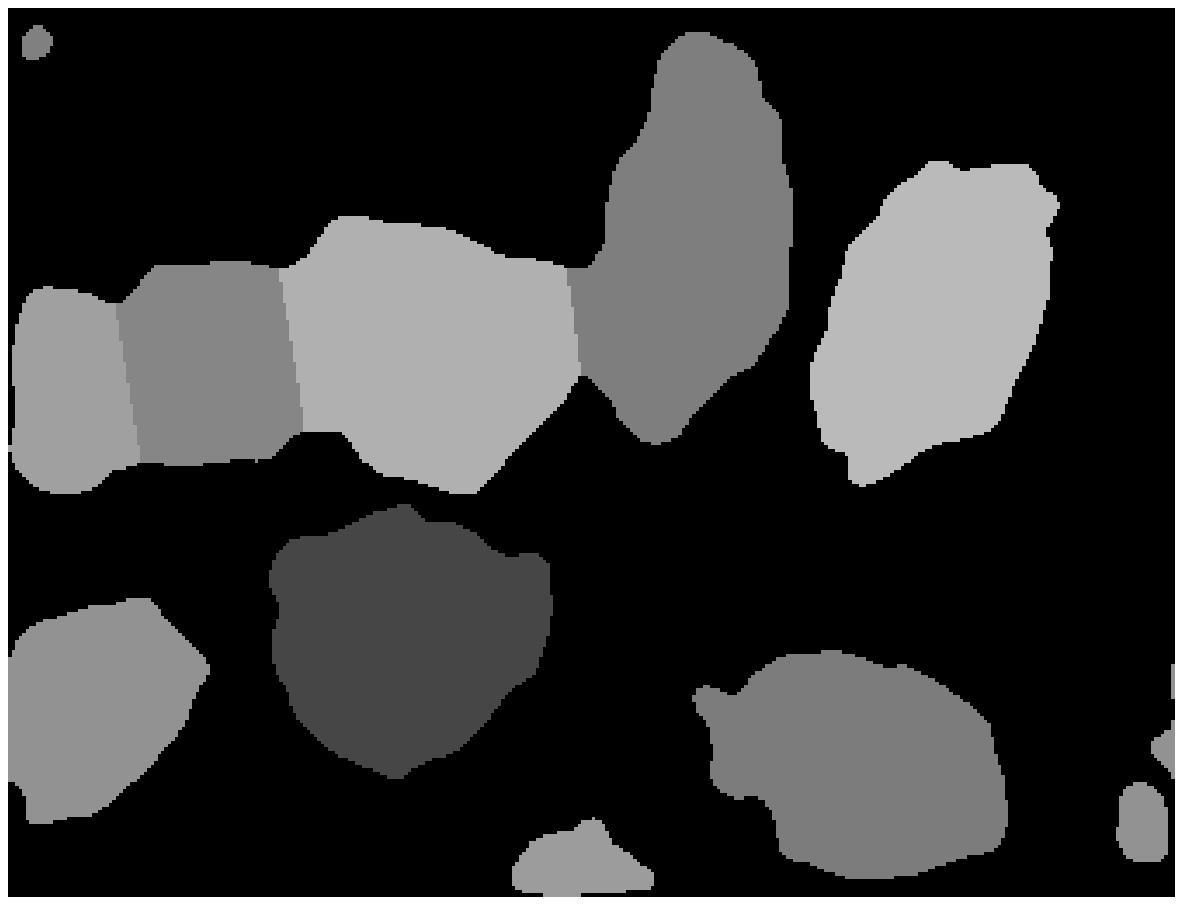}
& \includegraphics[width=0.2\textwidth]{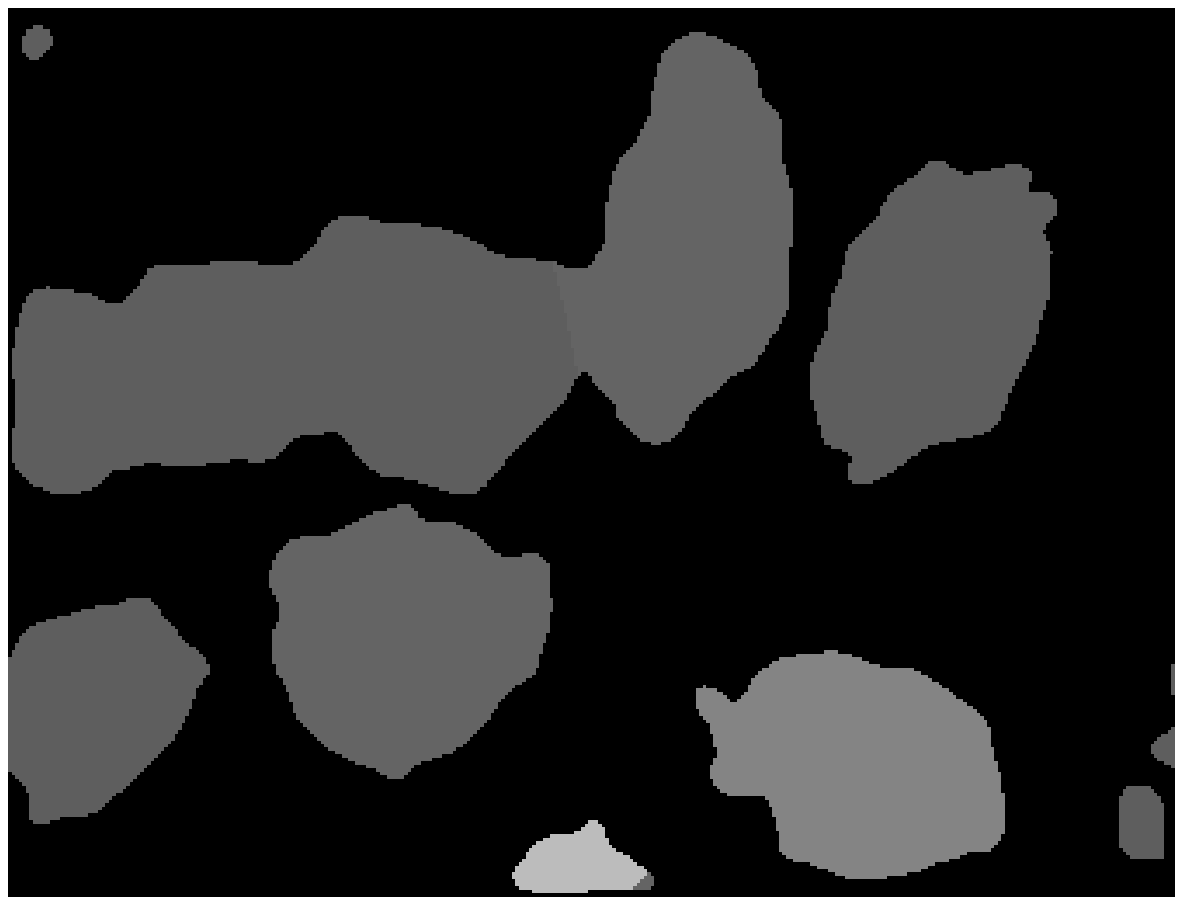}
\end{tabular}
\label{tab:gt}
}
\vspace{0.1in}
\caption{ \label{fig:AM} {\small Active mask segmentation of punctate patterns of proteins \citep{Srinivasa:09}.  (a) Original image (courtesy of Linstedt Lab \citep{LinstedtLab:web}) (b) Initialization using M=256 random masks. After one iteration of~\eqref{eq:am}, the background is separated from the foreground by the region-based skew function $R_1$. (c) Segmentation results using various scales of the voting filter at iterations 2, 8, 14 and at convergence. The first row shows a cross section of three Gaussian filters (scale=4, 16 and 32 respectively). The second row shows the segmentation result after two iterations of the algorithm. When scale=4, we observe a large number of small regions in the foreground. This is contrasted by the fewer number of regions when scale=16 and scale=32.  Subsequent rows represent the state of the system at various stages of evolution. The last row represents the convergence states. Note that the algorithm converges regardless of the scale chosen, but the segmentation is only biologically meaningful at the proper scale of 16; at scales=4, 32 the cells are oversegmented or undersegmented, respectively.  
}}
\end{figure*}

The purpose of this paper is to provide a rigorous investigation of the convergence behavior of the AM algorithm.  To be precise, we note that in a real-world implementation the AM algorithm occasionally fails to converge to a $\psi$ which is biologically meaningful.  However, even in these cases, the algorithm always seems to converge to something.  Indeed, when $\psi_{0}$ and the $R_m$'s are chosen at random, experimentation reveals that the repeated application of~\eqref{eq:am} always seems to eventually produce $\psi_{i}$ such that $\psi_{i+1}=\psi_{i}$.  At the same time, a simple example tempers one's expectations: taking $\Omega=\Int_4$, $M=2$, $w=\delta_{-1}+\delta_0+\delta_1$ and $R_1=R_2=0$, we see that AM will not always converge, as repeatedly applying \eqref{eq:am} to $\psi_{0}=\delta_0+\delta_2$ produces the endless $2$-cycle $\delta_0+\delta_2\mapsto\delta_1+\delta_3\mapsto\delta_0+\delta_2$.  In summary, even though random experimentation indicates that AM will almost certainly converge, there exist trivial examples which show that it will not always do so.  The central question that this paper seeks to address is therefore:
\begin{quote}
\begin{center}
Under what conditions on $g$ and $\{R_m\}_{m=1}^M$ will the AM algorithm always converge to a fixed point of \eqref{eq:am}~?
\end{center}
\end{quote}
We  show that when $g$ is an even function, AM will either converge to a fixed point or will get stuck in a $2$-cycle; no higher-order cycles are possible.  We can further rule out $2$-cycles whenever $g$ is taken so that the convolutional operator $f\mapsto f*g$ is positive semidefinite.  The following is a compilation of these results:
\begin{theorem}
\label{thm:suff}
Given any $\Omega:=\prod_{d=1}^D\Int_{N_d}$, initial segmentation $\psi_{0}:\Omega\rightarrow\{1,\dotsc,M\}$ and any real-valued functions $\{R_m\}_{m=1}^M$ over $\Omega$, the Active Mask algorithm, namely the repeated application of~\eqref{eq:am}, will always converge to a fixed point of~\eqref{eq:am} provided the discrete Fourier transform of $g$ is nonnegative and even.
\end{theorem}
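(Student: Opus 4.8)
The plan is to view the Active Mask iteration~\eqref{eq:am} as a deterministic parallel dynamical system on the finite set of label functions $\Omega\to\{1,\dots,M\}$ and to equip it with a Lyapunov-type functional, following the pattern of convergence proofs for symmetric parallel threshold dynamics and majority cellular automata. Two preliminary observations frame everything. First, since there are only $M^N$ label functions, the orbit $\psi_0,\psi_1,\dots$ is eventually periodic, say $\psi_{i+p}=\psi_i$ for all $i\ge i_0$, and the theorem is exactly the assertion that the minimal such $p$ is $1$. Second, \eqref{eq:am} has a variational reading: writing $\mu^{(i)}=(\mu^{(i)}_1,\dots,\mu^{(i)}_M)$ for the mask vector of $\psi_i$, the successor $\mu^{(i+1)}$ is a maximizer of
\[
  \Phi_{\mu^{(i)}}(\omega)\ :=\ \sum_{m=1}^{M}\ip{\mu^{(i)}_m*g+R_m}{\omega_m}
\]
over all mask vectors $\omega$ (all $\{0,1\}$-valued $\omega_m$ with $\sum_m\omega_m(n)=1$ at each $n$), because $\Phi_{\mu^{(i)}}$ decouples across pixels into $\max$-type terms solved exactly by the pixelwise $\argmax$ in \eqref{eq:am}. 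This is the only point at which the structure of AM is used, and it already records the fact that $\psi_{i+1}$ depends on $\psi_i$ solely through the score vector $\bigl(\mu^{(i)}_m*g+R_m\bigr)_{m}$, no matter how ties in the $\argmax$ are resolved.

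Next invoke that the DFT of $g$ is real and even, equivalently that the convolution operator $C\colon f\mapsto f*g$ is self-adjoint on $\ell^2(\Omega)$. Define the interaction energy of two consecutive states,
\[
  \mathcal E(i)\ :=\ -\sum_{m=1}^{M}\ip{\mu^{(i-1)}_m*g}{\mu^{(i)}_m}\ -\ \sum_{m=1}^{M}\ip{R_m}{\mu^{(i-1)}_m}\ -\ \sum_{m=1}^{M}\ip{R_m}{\mu^{(i)}_m}.
\]
Self-adjointness of $C$ makes $\mathcal E(i)$ invariant under swapping the two mask vectors $\mu^{(i-1)}\leftrightarrow\mu^{(i)}$, while the variational reading says $\mu^{(i)}$ minimizes $\mathcal E(i)$ over the second mask vector with the first held fixed (the term $-\sum_m\ip{R_m}{\mu^{(i-1)}_m}$ being then a constant). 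Combining these two facts gives $\mathcal E(i+1)\le\mathcal E(i)$ for every $i$; being real-valued and assuming only finitely many values along the orbit, $\mathcal E$ is constant once $i\ge i_0$. On the periodic part this constancy forces each $\mu^{(i)}$ to be a maximizer of both $\Phi_{\mu^{(i-1)}}$ and $\Phi_{\mu^{(i+1)}}$; a standard Lyapunov argument of this type then already excludes $p\ge 3$, leaving only fixed points and $2$-cycles --- this is the ``$g$ even'' half of the result.

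It remains to rule out genuine $2$-cycles, and here nonnegativity of the DFT of $g$, i.e.\ positive semidefiniteness of $C$, enters --- in fact it collapses any periodic orbit directly. Write the cycle as $\mu^{(i_0)},\dots,\mu^{(i_0+p-1)}$, indices read modulo $p$. Evaluating $\Phi_{\mu^{(j)}}$ at the competitor $\omega=\mu^{(j)}$ gives $\Phi_{\mu^{(j)}}(\mu^{(j+1)})\ge\Phi_{\mu^{(j)}}(\mu^{(j)})$; summing over a full period the $R_m$-contributions telescope to zero, and after symmetrizing the cross terms via self-adjointness one is left with
\[
  \sum_{j}\sum_{m=1}^{M}\ip{\bigl(\mu^{(j+1)}_m-\mu^{(j)}_m\bigr)*g}{\,\mu^{(j+1)}_m-\mu^{(j)}_m}\ \le\ 0.
\]
Positive semidefiniteness makes every summand nonnegative, hence each vanishes, so $\bigl(\mu^{(j+1)}_m-\mu^{(j)}_m\bigr)*g=0$; that is, $\mu^{(j)}_m*g$ is independent of $j$ along the cycle. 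But then all states of the cycle carry the same score vector $\bigl(\mu^{(j)}_m*g+R_m\bigr)_m$ and therefore the same successor, so the cycle reduces to a single state --- a fixed point of \eqref{eq:am}. Together with eventual periodicity this gives $\psi_{i+1}=\psi_i$ for all sufficiently large $i$, as claimed.

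The main difficulty I anticipate is not any single computation but the careful handling of two degeneracies. The $\argmax$ in \eqref{eq:am} is set-valued at ties, so AM is a bona fide map only after a tie-breaking rule is fixed, and one must check that every step above uses only the dependence of $\psi_{i+1}$ on $\psi_i$ through the score vector, never on which maximizer is chosen --- which holds for any pixelwise rule. And because the DFT of $g$ may vanish, $C$ has a nontrivial kernel, so positive semidefiniteness yields only that the convolved masks $\mu^{(j)}_m*g$ (not the masks themselves) are constant along the cycle; fortunately that is precisely the quantity the iteration reads, so the argument still closes. Finally, one should note that for real $g$ the hypothesis ``DFT nonnegative'' already forces the DFT to be real and even, so both properties of $C$ used above --- self-adjoint and positive semidefinite --- come for free.
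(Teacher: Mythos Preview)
Your argument is correct, and the PSD step is particularly clean: comparing $\Phi_{\mu^{(j)}}(\mu^{(j+1)})\ge\Phi_{\mu^{(j)}}(\mu^{(j)})$, telescoping the $R_m$-terms over the cycle, symmetrizing, and then using $\ip{Cf}{f}=0\Rightarrow Cf=0$ to make the score vectors constant along the orbit is a valid way to collapse the cycle, for any period, without ever invoking the tie-breaking rule or strict inequalities.

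The paper takes a related but stylistically different route. It works by contrapositive in two stages (Theorems~\ref{thm:sym} and~\ref{thm:fp}): assuming $K>2$ it sums the pixelwise $\argmax$ inequalities over the cycle and, crucially, tracks the \emph{strict} inequalities coming from the ``take the smallest maximizer'' rule to force $A\neq A^*$; then assuming $K=2$ it repeats the trick to obtain a strict $0<-\sum_m\ip{A(\mu_m^{(1)}-\mu_m^{(0)})}{\mu_m^{(1)}-\mu_m^{(0)}}$, exhibiting a $\{0,\pm1\}$-valued $f$ with $\ip{Af}{f}<0$. The paper even flags this distinction explicitly, noting its proof is ``closer in spirit to~\cite{PS:83}'' than to Lyapunov-functional arguments---which is exactly what your proposal is.

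What each buys: your variational proof is tie-breaking--agnostic and dispatches all periods at once in the PSD case, so the intermediate ``$K\le2$'' step is not actually needed for Theorem~\ref{thm:suff}. The paper's contrapositive, on the other hand, isolates a genuinely weaker sufficient condition for convergence (Theorem~\ref{thm:fp}): it only needs $\ip{Af}{f}\ge0$ for $\{0,\pm1\}$-valued $f$, not full positive semidefiniteness. Your equality-case step $\ip{Cf}{f}=0\Rightarrow Cf=0$ requires full PSD and so does not recover that refinement---but for Theorem~\ref{thm:suff} as stated (DFT of $g$ nonnegative, hence $C$ fully PSD) this makes no difference.
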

A preliminary version of the results in this paper appears in the conference proceeding~\cite{BalcanSFK:10}.  Though the specific AM algorithm was introduced in~\cite{Srinivasa:09}, iterative lowpass filtering has long been a subject of interest in applied harmonic analysis, having deep connections to continuous-domain ideas such as diffusion and the maximum principle~\citep{Nirenbarg:53}.  For instance, \cite{PeronaM:90} gives an edge detection application of a discretized version of these ideas. Meanwhile,~\citep{Koenderink:84} gives diffusion-inspired conditions under which lowpass filtering is guaranteed to produce a coarse version of a given image.  One way to prove the convergence of iterative convolution-thresholding schemes is to show that lowpass filtering decreases the number of zero-crossings in a signal; such a condition is equivalent to a version of the maximum principle~\citep{Hummel:87}.  More recently, the continuous-domain version of~\eqref{eq:ct} has been used to model the motion of interfaces between media~\citep{Ruuth:98,RuuthM:01}; in that setting,~\eqref{eq:ct} is known to converge if $M=2$.   Since the AM algorithm is iterative, many of the proof techniques we use here were adapted from \textit{majority cellular automata} (MCA), a well-studied class of discrete dynamical systems.  Indeed, theoretical guarantees on the convergence of a symmetric class of MCA have been known for several decades; see \citep{Goles:90,PS:83}, and references therein.  Such results were recently generalized to a quasi-symmetric class via the use of Lyapunov functionals~\citep{YN:09}.  Whereas much of traditional MCA theory focuses on the convergence of repeated applications of~\eqref{eq:ct}, our work differs due to the presence of the additive $R_m$ terms in~\eqref{eq:am}.

The paper is organized as follows.  In the next section, we use an MCA formulation of AM to prove our main convergence results.  In Section 3, we then briefly discuss the generalization of our main results to a less elegant yet more realistic version of~\eqref{eq:am} involving noncircular convolution.  We conclude in Section 4 with some examples illustrating our main results, as well as some experimental results indicating the AM algorithm's rate of convergence.

\section{Active Masks as a Majority Cellular Automaton}
\label{sec:cellauto}

Cellular automata are self-evolving discrete dynamical systems \citep{Goles:90}.  They have been applied in various fields such as statistical physics, computational biology, and the social sciences.  A tremendous amount of work in this area has focused on studying the convergence behavior of various types of automata. In this section, we formulate the AM algorithm~\eqref{eq:am} as an MCA in order to facilitate our understanding of its convergence behavior.  To be precise, we consider a generalization of~\eqref{eq:am} in which the convolutional operator $f\mapsto f*g$ may more broadly be taken to be any linear operator $A$ from $\ell(\Omega):=\set{f|f:\Omega\rightarrow\Reals}$ into itself:
\begin{equation}
\label{eq:am generalized}
\psi_{i}(n)
:=\min\Bigparen{\argmax\limits_{m}\bigbracket{(A\mu_{m}^{(i-1)})(n)+R_{m}(n)}},
\qquad \mu_m^{(i-1)}:=\left\{\begin{array}{ll}1,&\psi_{i-1}(n)=m,\\0,&\psi_{i-1}(n)\neq m.\end{array}\right.
\end{equation}
Here, the contribution of mask $m$ in deciding the outcome at location $n$ at iteration $i$ is $(A\mu_{m}^{(i-1)})(n)$, and any ties are broken by choosing the smallest $m$ corresponding to a maximal element.  Note that given any initial segmentation $\psi_0$, applying~\eqref{eq:am generalized} \textit{ad infinitum} produces a sequence $\{\psi_i\}_{i=0}^{\infty}$.  However, as there are only $M^N$ distinct possible configurations for $\psi:\Omega\rightarrow\{1,\dotsc,M\}$, this sequence must eventually repeat itself.  Indeed, taking minimal indices $i_0$ and $K>0$ such that $\psi_{i_0+K}=\psi_{i_0}$, the deterministic nature of~\eqref{eq:am generalized} implies that $\psi_{i+K}=\psi_i$ for all $i\geq i_0$.  The finite sequence $\{\psi_i\}_{i=i_0}^{i_0+K-1}$ is called a \textit{cycle} of~\eqref{eq:am generalized} of \textit{length} $K$.  Note that $\{\psi_i\}_{i=0}^{\infty}$ converges if and only if $K=1$, which happens precisely when $\psi_{i_0}$ is a fixed point of~\eqref{eq:am generalized}.

Thus, from this perspective, proving that \eqref{eq:am generalized} always converges is equivalent to proving that $K=1$ regardless of one's choice of $\psi_0$.  The following result goes a long way towards this goal, showing that if $A$ is self-adjoint, then for any $\psi_0$ we have that the resulting $K$ is necessarily $1$ or $2$.  That is, if $A$ is self-adjoint, then for any $\psi_0$, the sequence $\set{\psi_i}_{i=0}^{\infty}$ will either converge in a finite number of iterations, or it will eventually come to a point where it forever oscillates between two distinct configurations $\psi_{i_0}$ and $\psi_{i_0+1}$.
\begin{theorem}
\label{thm:sym}
If $A$ is self-adjoint, then for any $\psi_0$, the cycle length $K$ of \eqref{eq:am generalized} is either 1 or 2.
\end{theorem}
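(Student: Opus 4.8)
The plan is to construct a Lyapunov (energy) functional on the space of configurations that is non-increasing along the orbit of \eqref{eq:am generalized}, and strictly decreasing unless the orbit has entered a $2$-cycle. This is the classical strategy for symmetric majority automata (cf.\ \citep{Goles:90,PS:83}), but it must be adapted to accommodate the additive skew terms $R_m$. The natural candidate, writing $\mu^{(i)}=(\mu_1^{(i)},\dotsc,\mu_M^{(i)})$ for the tuple of masks at iteration $i$, is something of the form
\[
E_i \;:=\; -\sum_{m=1}^{M}\Bigip{A\mu_m^{(i-1)}}{\mu_m^{(i)}} \;-\; \sum_{m=1}^{M}\Bigip{R_m}{\mu_m^{(i-1)}+\mu_m^{(i)}},
\]
where $\ip{\cdot}{\cdot}$ is the standard inner product on $\ell(\Omega)$. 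The crucial features here are the use of \emph{two consecutive} iterates inside each term (so that the functional sees a sliding window of the orbit) and the symmetrization of the $R_m$ contribution across those two iterates.

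First I would compute the one-step difference $E_{i+1}-E_i$. Using self-adjointness of $A$ to move $A$ off $\mu_m^{(i)}$ and onto $\mu_m^{(i-1)}$ or $\mu_m^{(i+1)}$, the $A$-terms should telescope into a single expression of the form $-\sum_m\ip{A\mu_m^{(i)}}{\mu_m^{(i+1)}-\mu_m^{(i-1)}}$, and the $R$-terms collapse similarly to $-\sum_m\ip{R_m}{\mu_m^{(i+1)}-\mu_m^{(i-1)}}$. Combining,
\[
E_{i+1}-E_i \;=\; -\sum_{m=1}^{M}\Bigip{A\mu_m^{(i)}+R_m}{\;\mu_m^{(i+1)}-\mu_m^{(i-1)}}.
\]
Next I would evaluate this pixelwise. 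Fix $n\in\Omega$ and let $a:=\psi_{i+1}(n)$ and $b:=\psi_{i-1}(n)$ be the labels assigned at that pixel one step after and one step before. The inner sum over $m$ at pixel $n$ reduces to
\[
\bigbracket{(A\mu_b^{(i)})(n)+R_b(n)} \;-\; \bigbracket{(A\mu_a^{(i)})(n)+R_a(n)}.
\]
But $a=\psi_{i+1}(n)$ is by definition of \eqref{eq:am generalized} the label that \emph{maximizes} $(A\mu_m^{(i)})(n)+R_m(n)$ over $m$; hence this bracketed difference is $\le 0$ for every $n$, so each pixel contributes a non-negative amount to $E_i-E_{i+1}$, giving $E_{i+1}\le E_i$. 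Since $\ell(\Omega)$ is finite-dimensional and the masks take values in a finite set, $E_i$ takes finitely many values, so it must eventually stabilize: $E_{i+1}=E_i$ for all large $i$.

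The remaining — and I expect the most delicate — step is to show that $E_{i+1}=E_i$ forces $\psi_{i+1}=\psi_{i-1}$, which gives period dividing $2$. When $E_{i+1}=E_i$, every pixelwise term above vanishes, so for each $n$ the labels $a=\psi_{i+1}(n)$ and $b=\psi_{i-1}(n)$ both achieve the maximum of $(A\mu_m^{(i)})(n)+R_m(n)$. They need not be equal a priori, since the max may be attained by several labels; this is exactly where the tie-breaking rule ``take the minimal $m$'' in \eqref{eq:am generalized} enters. Since $\psi_{i+1}(n)$ is defined as the \emph{smallest} maximizing index given $\mu^{(i)}$, and $\psi_{i-1}(n)$ is also a maximizing index for the \emph{same} functional determined by the \emph{same} configuration $\mu^{(i)}$, we must have $\psi_{i+1}(n)\le\psi_{i-1}(n)$ for every $n$. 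To upgrade this inequality to an equality I would exploit that, once $E_i$ has stabilized, the same argument applied one step later gives $\psi_{i+2}(n)\le\psi_i(n)$, and iterating yields a chain $\dots\le\psi_{i+2}\le\psi_i\le\psi_{i-2}\le\dots$ of pointwise-monotone configurations (along each parity class) drawn from a finite set, hence eventually constant along each parity; combined with the minimality-of-index characterization this pins down $\psi_{i+2}=\psi_i$, i.e.\ $K\mid 2$. The main obstacle is handling these ties cleanly; the escape hatch is precisely the deterministic $\min$ in the definition of \eqref{eq:am generalized}, which makes the one-step map a well-defined function and lets the monotonicity argument close. $\square$
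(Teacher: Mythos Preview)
Your argument is correct, but it follows a genuinely different route from the paper's proof. You build an explicit two-step Lyapunov functional $E_i$, show it is nonincreasing via self-adjointness, and then use the deterministic $\min$ tie-break to extract the pointwise monotonicity $\psi_{i+1}\le\psi_{i-1}$ once $E$ has stabilized, finishing with a finite-descent argument along each parity class. This is precisely the ``traditional'' Goles-type energy method that the paper itself alludes to (and deliberately avoids) in the closing paragraph of Section~\ref{sec:cellauto}.

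The paper instead argues by contrapositive in the style of Poljak--S\r{u}ra: assume $K>2$, substitute $m=\psi_{i-2}(n)$ into the optimality inequalities \eqref{subeq:1}--\eqref{subeq:2}, observe that at some pixel the inequality is strict (because the orbit at that pixel is not already $2$-periodic), and then sum over all $n\in\Omega$ and all $i\in\Int_K$. The $R_m$ terms cancel by shift-invariance of $\Int_K$, and what remains simplifies via \eqref{eq:trace} to $0<\sum_{i,m}\ip{(A-A^*)\mu_m^{(i-1)}}{\mu_m^{(i)}}$, forcing $A\neq A^*$. No energy is ever named; the self-adjointness enters only at the very last step.

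The two proofs are close cousins: your telescoped difference $E_{i+1}-E_i$ is, after summing over a full cycle, essentially the same quantity the paper arrives at. What differs is packaging and where the tie-break is used. The paper uses the strict inequality \eqref{subeq:1} once to get a strict sum; you use the $\min$ rule to get a pointwise order $\psi_{i+1}\le\psi_{i-1}$ and then a separate finiteness argument. Your approach has the advantage of producing an explicit monotone quantity, potentially useful for rate questions; the paper's is shorter and avoids the secondary monotonicity-plus-finiteness step.
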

\begin{proof}
As we are not presently concerned with the rate of convergence of~\eqref{eq:am generalized}, but rather the question of whether it does converge, we may assume without loss of generality that $\{\psi_i\}_{i=0}^{\infty}$ has already entered its cycle.  That is, we reindex so that $\psi_0$ is the beginning of the $K$-cycle, and heretofore regard all iteration indices as members of the cyclic group $\Int_{K}$.  We argue by contrapositive, assuming $K>2$ and concluding that $A$ is not self-adjoint.  For any $i=1,\dotsc,K$, \eqref{eq:am generalized} is equivalent to the system of inequalities:
\begin{subequations}
\begin{align}
\label{subeq:1}& (A\mu_{\psi_{i}(n)}^{(i-1)})(n)+R_{\psi_{i}(n)}(n)>(A\mu_{m}^{(i-1)})(n)+R_{m}(n) &  & \textrm{if}~~1\leq m<\psi_{i}(n),\\
\label{subeq:2}& (A\mu_{\psi_{i}(n)}^{(i-1)})(n)+R_{\psi_{i}(n)}(n)\geq(A\mu_{m}^{(i-1)})(n)+R_{m}(n) &  & \textrm{if}~~\psi_{i}(n)\leq m\leq M.
\end{align}
\end{subequations}
Here, \eqref{subeq:2} follows from the fact that $\psi_i(n)$ is a value of $m$ that maximizes $(A\mu_{m}^{(i-1)})(n)+R_{m}(n)$.  Moreover, in the event of a tie, $\psi_i(n)$ is chosen to be the least of all such maximizing $m$, yielding the strict inequality in \eqref{subeq:1}.  For any $i$ and $n$, picking $m=\psi_{i-2}(n)$ in~\eqref{subeq:1} and~\eqref{subeq:2} leads to the subsystem of inequalities:
\begin{subequations}
\label{eq:red-syst}
\begin{align}
& (A\mu_{\psi_{i}(n)}^{(i-1)})(n)-(A\mu_{\psi_{i-2}(n)}^{(i-1)})(n)+R_{\psi_{i}(n)}(n)-R_{\psi_{i-2}(n)}(n)>0    &  & \textrm{if}~\psi_{i-2}(n)<\psi_{i}(n),\\
& (A\mu_{\psi_{i}(n)}^{(i-1)})(n)-(A\mu_{\psi_{i-2}(n)}^{(i-1)})(n)+R_{\psi_{i}(n)}(n)-R_{\psi_{i-2}(n)}(n)\geq0 &  & \textrm{if}~~\psi_{i-2}(n)\geq\psi_{i}(n).\end{align}
\end{subequations}
Now since $K>2$, there exists a pixel $n$ for which $\{\psi_0(n),\psi_1(n),\dotsc,\psi_{K-1}(n)\}$ is not of the form $\{a,a,\ldots,a\}$ nor of the form $\{a,b,a,b,\ldots a,b\}$.  At such an $n$, there must exist an $i$ such that $\psi_{i-2}(n)<\psi_{i}(n)$.  Consequently, at least one inequality in \eqref{eq:red-syst} is strict.  Thus, summing~\eqref{eq:red-syst} over all pixels $n$ and all cycle indices $i$ yields:
\begin{equation*}
0<\sum_{i\in\Int_{K}}\sum_{n\in\Omega}(A\mu_{\psi_{i}(n)}^{(i-1)})(n)-\sum_{i\in\Int_{K}}\sum_{n\in\Omega}(A\mu_{\psi_{i-2}(n)}^{(i-1)})(n)+\sum_{i\in\Int_{K}}\sum_{n\in\Omega}R_{\psi_{i}(n)}(n)-\sum_{i\in\Int_{K}}\sum_{n\in\Omega}R_{\psi_{i-2}(n)}(n).
\end{equation*}
Since $\Int_K$ is shift-invariant, $\displaystyle\sum_{i\in\Int_{K}}R_{\psi_{i}(n)}(n)=\sum_{i\in\Int_{K}}R_{\psi_{i-2}(n)}(n)$ for any $n\in\Omega$, reducing the previous equation to:
\begin{equation}
\label{eq:sum}
0
<\sum_{i\in\Int_{K}}\sum_{n\in\Omega}(A\mu_{\psi_{i}(n)}^{(i-1)})(n)-\sum_{i\in\Int_{K}}\sum_{n\in\Omega}(A\mu_{\psi_{i-2}(n)}^{(i-1)})(n)
=\sum_{i\in\Int_{K}}\sum_{n\in\Omega}(A\mu_{\psi_{i}(n)}^{(i-1)})(n)-\sum_{i\in\Int_{K}}\sum_{n\in\Omega}(A\mu_{\psi_{i-1}(n)}^{(i)})(n),
\end{equation}
where the final equality also follows from the shift-invariance of $\Int_K$.  To continue, note that for any $i,j\in\Int_K$ we have $\mu_m^{(j)}=1$ if and only if $\psi_j(n)=m$ and so:
\begin{equation}
\label{eq:trace}
\sum_{n\in\Omega}(A\mu_{\psi_j(n)}^{(i)})(n)
=\sum_{n\in\Omega}\sum_{m=1}^M(A\mu_m^{(i)})(n)\mu_m^{(j)}(n)
=\sum_{m=1}^M\ip{A\mu_m^{(i)}}{\mu_m^{(j)}},
\end{equation}
where \smash{$\displaystyle\ip{f}{g}:=\sum_{n\in\Omega}f(n)g(n)$} is the standard real inner product over $\Omega$.  Using~\eqref{eq:trace} in~\eqref{eq:sum} gives:
\begin{equation*}
0<\sum_{i\in\Int_{K}}\sum_{m=1}^M\ip{A\mu_m^{(i-1)}}{\mu_m^{(i)}}-\sum_{i\in\Int_{K}}\sum_{m=1}^M\ip{A\mu_m^{(i)}}{\mu_m^{(i-1)}}
=\sum_{i\in\Int_{K}}\sum_{m=1}^M\ip{(A-A^*)\mu_m^{(i-1)}}{\mu_m^{(i)}},
\end{equation*}
implying $A-A^*\neq0$, and so $A$ is not self-adjoint.
\end{proof}
Theorem~\ref{thm:sym} has strong implications for the AM algorithm~\eqref{eq:am}.  Indeed, it is well-known that if $g$ is real-valued, then the adjoint of the convolutional operator $A f=f*g$ is $A^* f=f*\tilde{g}$ where $\tilde{g}(n)=g(-n)$ is the \textit{reversal} of $g$.  As such, if $g$ is an even function, Theorem~\ref{thm:sym} guarantees that AM will always either converge or enter a $2$-cycle.  We now build on the techniques of the previous proof to find additional restrictions on $A$ which suffice to guarantee convergence:
\begin{theorem}
\label{thm:fp}
If $A$ is self-adjoint and $\ip{A f}{f}\geq0$ for all $f:\Omega\rightarrow\{0,\pm1\}$, then~\eqref{eq:am generalized} always converges.
\end{theorem}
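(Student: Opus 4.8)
The plan is to leverage Theorem~\ref{thm:sym} and then rule out the remaining bad case by a Lyapunov-type summation argument. Since $A$ is self-adjoint, Theorem~\ref{thm:sym} already guarantees that the cycle length $K$ is $1$ or $2$, and convergence is precisely the statement $K=1$; so it suffices to show that a genuine $2$-cycle cannot occur.

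First I would reindex, exactly as in the proof of Theorem~\ref{thm:sym}, so that $\{\psi_i\}_{i=0}^\infty$ has already entered its cycle, regarding iteration indices as elements of $\Int_2$, and assume for contradiction that $\psi_0\neq\psi_1$. I would then revisit the equivalent system of inequalities \eqref{subeq:1}--\eqref{subeq:2}, but now test it against the competitor label $m=\psi_{i-1}(n)$ rather than $m=\psi_{i-2}(n)$ (in a $2$-cycle these two indices coincide, so the ``other phase'' $\psi_{i-1}$ is the informative choice). This produces, for every pixel $n$ and every $i\in\Int_2$, the inequality
$(A\mu_{\psi_{i}(n)}^{(i-1)})(n)-(A\mu_{\psi_{i-1}(n)}^{(i-1)})(n)+R_{\psi_{i}(n)}(n)-R_{\psi_{i-1}(n)}(n)\ge 0$,
which is strict whenever $\psi_{i-1}(n)<\psi_{i}(n)$. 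Since $\psi_0(n)\neq\psi_1(n)$ at some pixel, one of the two phases gives a strict inequality there, so summing over all $n$ and all $i\in\Int_2$ yields a \emph{strict} inequality; and the $R$-terms telescope away by the shift-invariance of $\Int_2$, just as in the passage leading to \eqref{eq:sum}.

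Next I would rewrite the surviving sum in terms of inner products via the identity \eqref{eq:trace}, obtaining $0<\sum_{i\in\Int_2}\sum_{m=1}^{M}\bigparen{\ip{A\mu_m^{(i-1)}}{\mu_m^{(i)}}-\ip{A\mu_m^{(i-1)}}{\mu_m^{(i-1)}}}$. Expanding the outer sum over $i\in\Int_2$, using that $\mu_m^{(i)}$ is $2$-periodic in $i$, and applying self-adjointness once to symmetrize the cross term, the four contributions for each fixed $m$ collapse into $-\ip{A(\mu_m^{(0)}-\mu_m^{(1)})}{\mu_m^{(0)}-\mu_m^{(1)}}$. Hence $0<-\sum_{m=1}^{M}\ip{A(\mu_m^{(0)}-\mu_m^{(1)})}{\mu_m^{(0)}-\mu_m^{(1)}}$. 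But each $\mu_m^{(0)}-\mu_m^{(1)}$ is a difference of two $\{0,1\}$-valued functions, hence takes values in $\{0,\pm1\}$, so the hypothesis $\ip{Af}{f}\ge0$ forces every summand to be nonnegative and the right-hand side to be $\le 0$ — a contradiction. Therefore $K\neq 2$, so $K=1$ and \eqref{eq:am generalized} converges.

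The step requiring the most care is the index bookkeeping: using the competitor $\psi_{i-1}$ (not $\psi_{i-2}$), and then tracking which mask configuration — $\mu_m^{(i)}$ versus $\mu_m^{(i-1)}$ — survives after \eqref{eq:trace}, so that the telescoped sum genuinely becomes the negative of a quadratic form in the differences $\mu_m^{(0)}-\mu_m^{(1)}$. Everything else is a direct reuse of the machinery already built for Theorem~\ref{thm:sym}; the one new ingredient is the elementary observation that a difference of two masks is $\{0,\pm1\}$-valued, which is exactly the set on which positive semidefiniteness of $A$ is assumed.
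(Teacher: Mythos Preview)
Your proposal is correct and follows essentially the same route as the paper's proof: reduce to ruling out $K=2$ via Theorem~\ref{thm:sym}, test the defining inequalities \eqref{subeq:1}--\eqref{subeq:2} against the competitor $m=\psi_{i-1}(n)$, sum over $n$ and $i\in\Int_2$ (the $R$-terms cancel), apply \eqref{eq:trace}, and collapse the result to $-\sum_m\ip{A(\mu_m^{(0)}-\mu_m^{(1)})}{\mu_m^{(0)}-\mu_m^{(1)}}>0$, contradicting the hypothesis since each $\mu_m^{(0)}-\mu_m^{(1)}$ is $\{0,\pm1\}$-valued. One tiny remark: the algebraic collapse into $-\ip{A(\mu_m^{(0)}-\mu_m^{(1)})}{\mu_m^{(0)}-\mu_m^{(1)}}$ is a pure bilinear identity and does not actually require self-adjointness, so that step is even simpler than you indicate.
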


\begin{proof}
In light of Theorem~\ref{thm:sym}, our goal is to rule out cycles of length $K=2$.  We argue by contrapositive.  That is, we assume that there exist two distinct configurations $\psi_{0}$ and $\psi_{1}$ which are successors of each other, and will use this fact to produce $f:\Omega\rightarrow\{0,\pm1\}$ such that $\ip{A f}{f}<0$.  Substituting $i=0$ and $m=\psi_{1}(n)$ into~\eqref{subeq:1} and~\eqref{subeq:2} yields:
\begin{subequations}
\begin{align}
\label{eq:new-red-syst0:a}
& (A\mu_{\psi_{0}(n)}^{(1)})(n)-(A\mu_{\psi_{1}(n)}^{(1)})(n)+R_{\psi_{0}(n)}(n)-R_{\psi_{1}(n)}(n)>0 &  & \textrm{if}~~\psi_{1}(n)<\psi_{0}(n),\\
\label{eq:new-red-syst0:b}
& (A\mu_{\psi_{0}(n)}^{(1)})(n)-(A\mu_{\psi_{1}(n)}^{(1)})(n)+R_{\psi_{0}(n)}(n)-R_{\psi_{1}(n)}(n)\geq0 &  & \textrm{if}~~\psi_{1}(n)\geq\psi_{0}(n).
\end{align}
\end{subequations}
Similarly, letting $i=1$ and $m=\psi_{0}(n)$ into~\eqref{subeq:1} and~\eqref{subeq:2} yields:
\begin{subequations}
\begin{align}
\label{eq:new-red-syst1:a}
&(A\mu_{\psi_{1}(n)}^{(0)})(n)-(A\mu_{\psi_{0}(n)}^{(0)})(n)+R_{\psi_{1}(n)}(n)-R_{\psi_{0}(n)}(n)>0 &  & \textrm{if}~~\psi_{0}(n)<\psi_{1}(n),\\
\label{eq:new-red-syst1:b}
&(A\mu_{\psi_{1}(n)}^{(0)})(n)-(A\mu_{\psi_{0}(n)}^{(0)})(n)+R_{\psi_{1}(n)}(n)-R_{\psi_{0}(n)}(n)\geq0 &  & \textrm{if}~~\psi_{0}(n)\geq\psi_{1}(n).
\end{align}
\end{subequations}
Since $\psi_0$ and $\psi_1$ are distinct, there exists $n_0\in\Omega$ such that $\psi_0(n_0)\neq\psi_1(n_0)$.  If $\psi_0(n_0)<\psi_1(n_0)$,  we sum~\eqref{eq:new-red-syst0:b} and~\eqref{eq:new-red-syst1:a} over all $n\in\Omega$.  If on the other hand $\psi_0(n_0)>\psi_1(n_0)$, we sum~\eqref{eq:new-red-syst0:a} and~\eqref{eq:new-red-syst1:b} over all $n\in\Omega$.  Either way, we obtain:
\begin{equation*}
0<\sum_{n=1}^N\bigbracket{(A\mu_{\psi_{0}(n)}^{(1)})(n)-(A\mu_{\psi_{1}(n)}^{(1)})(n)+(A\mu_{\psi_{1}(n)}^{(0)})(n)-(A\mu_{\psi_{0}(n)}^{(0)})(n)}.
\end{equation*}
Applying~\eqref{eq:trace} four times then gives:
\begin{equation*}
0
<\sum_{m=1}^M\bigbracket{\ip{A\mu_m^{(1)}}{\mu_m^{(0)}}-\ip{A\mu_m^{(1)}}{\mu_m^{(1)}}+\ip{A\mu_m^{(0)}}{\mu_m^{(1)}}-\ip{A\mu_m^{(0)}}{\mu_m^{(0)}}}
=-\sum_{m=1}^{M}\bigip{A(\mu_m^{(1)}-\mu_m^{(0)})}{(\mu_m^{(1)}-\mu_m^{(0)})}.
\end{equation*}
As such, there exists at least one index $m_0$ such that $0>\bigip{A(\mu_{m_0}^{(1)}-\mu_{m_0}^{(0)})}{(\mu_{m_0}^{(1)}-\mu_{m_0}^{(0)})}$; choose $f$ to be $\mu_{m_0}^{(1)}-\mu_{m_0}^{(0)}$.
\end{proof}
The most obvious way to ensure that $\ip{A f}{f}\geq0$ for all $f:\Omega\rightarrow\{0,\pm1\}$ is for $A$ to be positive semidefinite, that is, $\ip{A f}{f}\geq0$ for all $f:\Omega\rightarrow\Reals$.  This in turn can be guaranteed by taking $A$ to be diagonally dominant with nonnegative diagonal entries, via the Gershgorin circle Theorem \cite{Golub96book}. Note that in fact strict diagonal dominance guarantees that iterative voting~\eqref{eq:ct} always converges in one iteration.  More interesting examples can be found in the special case where $A$ is a convolutional operator $Af=f*g$.  Indeed, letting $\rmF$ be the standard non-normalized discrete Fourier transform (DFT) over $\Omega$, we have:
\begin{equation}
\label{eq:posdef}
\ip{Af}{f}=\ip{f*g}{f}=\frac1N\bigip{\rmF(f*g)}{\rmF f}=\frac1N\bigip{(\rmF f)(\rmF g)}{\rmF f}=\frac1N\sum_{n\in\Omega}(\rmF g)(n)\bigabs{(\rmF f)(n)}^2.
\end{equation}
As such, if $g:\Omega\rightarrow\Reals$ is even and $(\rmF g)(n)\geq0$ for all $n\in\Omega$, then $A$ is self-adjoint positive semidefinite.   Moreover, it is well-known that $g$ is real-valued and even if and only if $\rmF g$ is also real-valued and even.  Thus, $A$ is self-adjoint positive semidefinite provided $\rmF g$ is nonnegative and even.  For such $g$, Theorem~\ref{thm:fp} guarantees that the AM algorithm~\eqref{eq:am} will always converge.  These facts are summarized in Theorem~\ref{thm:suff}, which is stated in the introduction.  Examples of $g$ that satisfy these hypotheses are given in Section~\ref{sec:experiments}.

We emphasize that Theorem~\ref{thm:fp} does not require $A$ to be positive semidefinite, but rather only that $\ip{A f}{f}\geq0$ for all $f:\Omega\rightarrow\{0,\pm1\}$.  In the case of convolutional operators, this means we truly only need~\eqref{eq:posdef} to hold for such $f$'s.  As such, it may be overly harsh to require that $(Fg)(n)\geq0$ for all $n\in\Omega$.  Unfortunately, the problem of characterizing such $g$'s appears difficult, as we could find no useful frequency-domain characterizations of $\{0,\pm1\}$-valued functions.  A spatial domain approach is more encouraging: when $\Omega=\Int_N$, writing $f:\Omega\rightarrow\set{0,\pm1}$ as the difference of two characteristic functions $\chi_{I_1},\chi_{I_2}:\Omega\rightarrow\set{0,1}$,
we have:
\begin{equation*}
\ip{Af}{f}
=\bigip{A(\chi_{I_1}-\chi_{I_2})}{(\chi_{I_1}-\chi_{I_2})}
=\ssum(A_{1,1})+\ssum(A_{2,2})-\ssum(A_{1,2})-\ssum(A_{2,1}),
\end{equation*}
where $\ssum(A_{i,j})$ denotes the sum of all entries of the submatrix of $A$ consisting of rows from $I_i$ and columns from $I_j$.  As such, the condition of Theorem~\ref{thm:fp} reduces to showing that $0\leq \ssum(A_{1,1})+\ssum(A_{2,2})-\ssum(A_{1,2})-\ssum(A_{2,1})$ for all choices of subsets $I_i$ and $I_j$ of $\Int_N$.

We conclude this section by noting that~\eqref{eq:am generalized} is similar to \textit{threshold cellular automata} (TCA)~\cite{Goles:90,Goles85dam}.  In fact,~\eqref{eq:am generalized} is equivalent to TCA in the special case of $M=2$; in this case, $\mu_{0}^{(i-1)}(n)=1-\mu_{1}^{(i-1)}(n)$ for all $n\in\Omega$, implying:
\begin{align*}
(A\mu_1^{(i-1)})(n)+R_{1}(n)>(A\mu_0^{(i-1)})(n)+R_{0}(n)\quad
&\Longleftrightarrow\quad\bigbracket{A(\mu_{1}^{(i-1)}-\mu_{0}^{(i-1)})}(n)+(R_{1}-R_{0})(n)>0\\
&\Longleftrightarrow\quad\Bigset{A\bigbracket{\mu_{1}^{(i-1)}-(1-\mu_{1}^{(i-1)})}}(n)+(R_{1}-R_{0})(n)>0\\
&\Longleftrightarrow\quad(A\mu_{1}^{(i-1)})(n)+\tfrac12(R_{1}-R_{0}-A1)(n)>0\\
&\Longleftrightarrow\quad(A\mu_{1}^{(i-1)})(n)+b(n)>0,
\end{align*}
where $b(n):=\tfrac12(R_{1}-R_{0}-A1)(n)$.  That is, when $M=2$, the AM algorithm is equivalent to a threshold-like decision.  But whereas the traditional method for proving the convergence of TCA involves associated quadratic Lyapunov functionals~\cite{Goles85dam}, our method for proving the convergence of AM is more direct, being closer in spirit to that of~\cite{PS:83}.

\section{Beyond symmetry}
\label{sec:BeyondSymmetry}

Up to this point, we have focused on the convergence of~\eqref{eq:am generalized} in the special case where $A$ is self-adjoint.  In this section, we discuss how Theorems~\ref{thm:sym} and~\ref{thm:fp} generalize to the case of \textit{quasi-self-adjoint} operators, which arise in real-world implementation of the AM algorithm.  To clarify, up to this point, we have let the image $f$ and weights $g$ be functions over the finite abelian group $\Omega=\prod_{d=1}^D\Int_{N_d}$ and have taken the convolutions in~\eqref{eq:am} and~\eqref{eq:am generalized} to be circular.  In real-world implementation, the use of such circular convolutions can result in poor segmentation, as values at one edge of the image are used to influence the segmentation at the unrelated opposite edge.

One solution to this problem---implemented in~\cite{Srinivasa:09}---is to redefine the set of pixels as a subset $\Omega:=\prod_{d=1}^D[0,N_d)$ of the $D$-dimensional integer lattice $\Int^D$, and regard our image $f$ as a member of $\ell(\Omega):=\set{f:\Int^D\rightarrow\Reals\ |\ f(n)=0\ \forall n\notin\Omega}$.  Here, the label function $\psi$ and masks $\mu_m$ are regarded as $\{1,\dotsc,M\}$- and $\{0,1\}$-valued members of $\ell(\Omega)$, respectively, and the (noncommutative) convolution of any $f,g\in\ell(\Omega)$ with $g\in\ell^2(\Int^D)$ is defined as $f\star g\in\ell(\Omega)$,
\begin{equation}
\label{eq:newconv}
(f\star g)(n):= \frac{(f* g)(n)}{(\chi_{\Omega}* g)(n)},\quad\forall{n}\in\Omega,
\end{equation}
where $\chi_{\Omega}$ is the characteristic function of $\Omega$, and $*$ denotes standard (noncircular) convolution in $\ell^2(\Int^D)$.  For the theory below, we need to place additional restrictions on $g$, namely that it belongs to the class:
\begin{equation*}
\mathcal{G}(\Omega):=\set{g\in\ell^2(\Int^D) : (\chi_{\Omega}* g)(n)>0\ \ \forall n\in\Omega}.
\end{equation*}
In this setting, for a given $g\in\mathcal{G}(\Omega)$, the AM algorithm~\eqref{eq:am} becomes:
\begin{equation}
\label{eq:am,noncircular}
\text{Noncircular Active Masks:}
\qquad\psi_{i}(n)=\argmax\limits_{1\leq m\leq M}~\bigbracket{(\mu_{m}^{(i-1)}\star g)(n)+R_{m}(n)},
\qquad \mu_m^{(i-1)}:=\left\{\begin{array}{ll}1,&\psi_{i-1}(n)=m,\\0,&\psi_{i-1}(n)\neq m.\end{array}\right.
\end{equation}
Note that the use of the $\star$-convolution in~\eqref{eq:am,noncircular} ensures that any ``missing votes'' are not counted in favor of any label $m$.  Moreover, the denominator of~\eqref{eq:newconv} ensures that when $n$ is close to an edge of $\Omega$, the weights in the $g$-neighborhood of $n$ are rescaled so as to always sum to one.  This rescaling ensures that $\sum_{m=1}^{M}(\mu_{m}^{(i)}\star g)(n)=1$ for all $n\in\Omega$, avoiding any need to modify the skew functions $R_m$ near the boundary.

We then ask the question: for what $g$ will \eqref{eq:am,noncircular} always converge?  The key to answering this question is to realize that the $\star$-filtering operation $Af=f\star g$ can be factored as $A=DB$, where $B$ is the standard filtering operator $Bf=f*g$ and $(Df)(n)=\lambda_n f(n)$, where $\lambda_n=[(\chi_{\Omega}* g)(n)]^{-1}$.  Here, $A$, $B$ and $D$ are all regarded as linear operators from $\ell(\Omega)$ into itself.  More generally, we inquire into the convergence of:
\begin{equation}
\label{eq:am generalized,noncircular}
\psi_{i}(n)=\argmax\limits_{1\leq m\leq M}~\bigbracket{(A\mu_{m}^{(i-1)})(n)+R_{m}(n)},
\qquad \mu_m^{(i-1)}:=\left\{\begin{array}{ll}1,&\psi_{i-1}(n)=m,\\0,&\psi_{i-1}(n)\neq m,\end{array}\right.
\end{equation}
where $A=DB$ and $D$ is \textit{positive-multiplicative}, that is, $(Df)(n)=\lambda_n f(n)$ where $\lambda_n>0$ for all $n\in\Omega$.  In particular, we follow~\citep{YN:09} in saying that $A$ is \textit{quasi-self-adjoint} if there exists a positive-multiplicative operator $D$ and a self-adjoint operator $B$ such that $A=DB$.  This definition in hand, we have the following generalization of Theorems~\ref{thm:sym} and~\ref{thm:fp}:
\begin{theorem}
\label{thm:quasi}
Let $A$ be quasi-self-adjoint: $A=DB$ where $D$ is positive-multiplicative and $B$ is self-adjoint. Then for any $\psi_0$, the cycle length $K$ of \eqref{eq:am generalized,noncircular} is either 1 or 2.  Moreover, if $B$ is positive-semidefinite, then \eqref{eq:am generalized,noncircular} always converges.
\end{theorem}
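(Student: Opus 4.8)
The plan is to deduce Theorem~\ref{thm:quasi} from Theorems~\ref{thm:sym} and~\ref{thm:fp} by replacing the standard inner product on $\ell(\Omega)$ with the one induced by $D$. Writing $(Df)(n)=\lambda_n f(n)$ with $\lambda_n>0$ for all $n\in\Omega$, I would introduce
\[
\ip{f}{g}_D:=\sum_{n\in\Omega}\lambda_n^{-1}f(n)g(n),
\]
which is a genuine inner product precisely because every $\lambda_n$ is strictly positive. The one structural fact that makes everything work is that, relative to $\ip{\cdot}{\cdot}_D$, the operator $A=DB$ is self-adjoint: for any $f,g\in\ell(\Omega)$ one has $\ip{Af}{g}_D=\ip{D^{-1}DBf}{g}=\ip{Bf}{g}=\ip{f}{Bg}=\ip{f}{D^{-1}DBg}=\ip{f}{Ag}_D$, using only that $B$ is self-adjoint in the usual inner product. (One also records that the finiteness argument preceding Theorem~\ref{thm:sym} applies unchanged, since $\Omega$ is finite and~\eqref{eq:am generalized,noncircular} is deterministic, so a finite cycle of some length $K$ exists.)

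For the first assertion ($K\le 2$), I would transcribe the proof of Theorem~\ref{thm:sym} verbatim, the only modification being that the pointwise voting inequalities are weighted before summing. Concretely, \eqref{subeq:1}, \eqref{subeq:2} and the reduced system~\eqref{eq:red-syst} are pointwise and carry over unaltered; multiplying each by the positive scalar $\lambda_n^{-1}$ preserves strict versus non-strict inequalities, and summing over $n\in\Omega$ and $i\in\Int_K$ gives (as before) a strict inequality because the combinatorial argument still produces a pixel and index where one inequality is strict. Shift-invariance in $i$ cancels the $R$-difference terms for each fixed $n$, and the $D$-weighted analogue of~\eqref{eq:trace}, namely $\sum_{n\in\Omega}\lambda_n^{-1}(A\mu_{\psi_j(n)}^{(i)})(n)=\sum_{m=1}^{M}\ip{A\mu_m^{(i)}}{\mu_m^{(j)}}_D$, reassembles the sum into $0<\sum_{i\in\Int_K}\sum_{m}\ip{(A-A^{\dagger})\mu_m^{(i-1)}}{\mu_m^{(i)}}_D$, where $A^{\dagger}$ is the adjoint relative to $\ip{\cdot}{\cdot}_D$. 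Since $A$ is self-adjoint for $\ip{\cdot}{\cdot}_D$, the right-hand side is $0$, a contradiction; hence $K\in\{1,2\}$.

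For the second assertion, I would rule out $K=2$ exactly as in the proof of Theorem~\ref{thm:fp}, again in the $D$-weighted inner product. The relevant quadratic form is now $\ip{Af}{f}_D=\ip{Bf}{f}$, which is nonnegative for \emph{every} $f$ once $B$ is positive semidefinite, in particular for $\{0,\pm1\}$-valued $f$. Assuming a $2$-cycle $\psi_0\ne\psi_1$, I would substitute $i=0,m=\psi_1(n)$ and $i=1,m=\psi_0(n)$ into~\eqref{subeq:1}--\eqref{subeq:2} to obtain~\eqref{eq:new-red-syst0:a}--\eqref{eq:new-red-syst1:b}, multiply by $\lambda_n^{-1}$, and sum over $n$ with the appropriate choice of strict/non-strict rows dictated by the sign of $\psi_0(n_0)-\psi_1(n_0)$. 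The $R$-terms cancel pairwise, and four applications of the $D$-weighted~\eqref{eq:trace} yield $0<-\sum_{m}\ip{A(\mu_m^{(1)}-\mu_m^{(0)})}{\mu_m^{(1)}-\mu_m^{(0)}}_D$, so some summand is strictly negative, contradicting $\ip{Ag}{g}_D=\ip{Bg}{g}\ge0$ at $g=\mu_{m_0}^{(1)}-\mu_{m_0}^{(0)}$. Hence $K=1$ and~\eqref{eq:am generalized,noncircular} converges.

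I do not expect a serious obstacle here: the difficulty is conceptual rather than computational, and it lies entirely in step one—recognizing that the positivity of the multipliers $\lambda_n$ is exactly what is needed for $\ip{\cdot}{\cdot}_D$ to be an inner product in which $DB$ inherits self-adjointness (and positive semidefiniteness) from $B$. Once that is in place, the arguments of Theorems~\ref{thm:sym} and~\ref{thm:fp} go through essentially word for word, with the harmless insertion of the positive weights $\lambda_n^{-1}$ before each summation over pixels. The one point requiring a line of care is checking that no boundary or normalization issue interferes—but since $D$ merely rescales each coordinate by a positive number and leaves the pointwise $\operatorname{argmax}$ structure intact, this is immediate.
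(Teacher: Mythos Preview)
Your proposal is correct and takes essentially the same approach as the paper: the paper divides each pointwise inequality by $\lambda_n$ before summing, which turns each occurrence of $A$ into $B$ and then invokes self-adjointness and positive semidefiniteness of $B$ directly. Your packaging of this step as passing to the weighted inner product $\ip{\cdot}{\cdot}_D$ (in which $A=DB$ becomes self-adjoint and satisfies $\ip{Af}{f}_D=\ip{Bf}{f}\ge 0$) is an equivalent, slightly more conceptual, restatement of the same manipulation.
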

\begin{proof}
We only outline the proof, as it closely follows those of Theorems~\ref{thm:sym} and~\ref{thm:fp}.  Let $(Df)(n)=\lambda_n f(n)$ with $\lambda_n>0$ for all $n\in\Omega$.  We prove the first conclusion by contrapositive, assuming $K>2$.   Rather than summing \eqref{eq:red-syst} over all $n$ and $i$ directly, we instead first divide each instance of \eqref{eq:red-syst} by the corresponding $\lambda_n$, and then sum.  The resulting quantity is analogous to \eqref{eq:sum}:
\begin{equation}
\label{eq:sum, quasi}
0
<\sum_{i\in\Int_{K}}\sum_{n\in\Omega}\frac1{\lambda_n}(A\mu_{\psi_{i}(n)}^{(i-1)})(n)-\sum_{i\in\Int_{K}}\sum_{n\in\Omega}\frac1{\lambda_n}(A\mu_{\psi_{i-1}(n)}^{(i)})(n)
=\sum_{i\in\Int_{K}}\sum_{n\in\Omega}(B\mu_{\psi_{i}(n)}^{(i-1)})(n)-\sum_{i\in\Int_{K}}\sum_{n\in\Omega}(B\mu_{\psi_{i-1}(n)}^{(i)})(n).
\end{equation}
Simplifying the right-hand side of~\eqref{eq:sum, quasi} with~\eqref{eq:trace} quickly reveals that $B$ cannot be self-adjoint, completing this part of the proof.  For the second conclusion, we again prove by contrapositive, assuming $K=2$.  Dividing \eqref{eq:new-red-syst0:a}, \eqref{eq:new-red-syst0:b}, \eqref{eq:new-red-syst1:a} and \eqref{eq:new-red-syst1:b} by $\lambda_n$ and then summing either \eqref{eq:new-red-syst0:a} and \eqref{eq:new-red-syst1:b} over all $n$ or \eqref{eq:new-red-syst0:b} and \eqref{eq:new-red-syst1:a} over all $n$ gives:
\begin{equation*}
0<\sum_{n=1}^N\frac1{\lambda_n}\bigbracket{(A\mu_{\psi_{0}(n)}^{(1)})(n)-(A\mu_{\psi_{1}(n)}^{(1)})(n)+(A\mu_{\psi_{1}(n)}^{(0)})(n)-(A\mu_{\psi_{0}(n)}^{(0)})(n)}
=-\sum_{m=1}^{M}\bigip{B(\mu_m^{(1)}-\mu_m^{(0)})}{(\mu_m^{(1)}-\mu_m^{(0)})},
\end{equation*}
implying $B$ is not positive semidefinite.
\end{proof}
For a result about the convergence of~\eqref{eq:am,noncircular}, we apply Theorem~\ref{thm:quasi} to $A=DB$ where $\lambda_n=[(\chi_{\Omega}* g)(n)]^{-1}$ and $Bf=f*g$.  Note that we must have $g\in\mathcal{G}(\Omega)$ in order to guarantee that $D$ is positive.  Moreover, $B$ is self-adjoint if $g\in\ell^2(\Int^D)$ is even; since $g$ is real-valued, this is equivalent to having its classical Fourier series $\hat{g}\in L^2(\mathbb{T}^D)$ be real-valued and even.  Meanwhile, since:
\begin{equation*}
\ip{Bf}{f}
=\ip{f*g}{f}
=\ip{\hat{f}\hat{g}}{\hat{f}}
=\int_{\mathbb{T}^d}\hat{g}(x)\bigabs{\hat{f}(x)}^2\,\mathrm{d}x,
\end{equation*}
then $B$ is positive semidefinite if $\hat{g}(x)\geq0$ for almost every $x\in\mathbb{T}^D$.  To summarize, we have:
\begin{corollary}
\label{cor:noncirculant}
If the Fourier series of $g\in\mathcal{G}(\Omega)$ is nonnegative and even, then \eqref{eq:am,noncircular} will always converge.
\end{corollary}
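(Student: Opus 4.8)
The plan is simply to check that the noncircular filtering operator $Af=f\star g$ fits the template of Theorem~\ref{thm:quasi}, so that the conclusion follows at once. Recall the factorization already recorded above: write $A=DB$, where $B\colon\ell(\Omega)\to\ell(\Omega)$ is $Bf=(f*g)|_{\Omega}$ (ordinary convolution on $\Int^D$, truncated back to $\Omega$) and $D\colon\ell(\Omega)\to\ell(\Omega)$ is the multiplication operator $(Df)(n)=\lambda_n f(n)$ with $\lambda_n=[(\chi_{\Omega}*g)(n)]^{-1}$. Since $g\in\mathcal{G}(\Omega)$, each $(\chi_{\Omega}*g)(n)$ is strictly positive, so every $\lambda_n$ is a well-defined positive number and $D$ is positive-multiplicative; moreover $D$ clearly maps $\ell(\Omega)$ into itself, being pointwise multiplication. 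A one-line check shows $(Af)(n)=\lambda_n(f*g)(n)=(f\star g)(n)$ for $n\in\Omega$, so this is genuinely the operator appearing in~\eqref{eq:am,noncircular}, and hence~\eqref{eq:am,noncircular} is an instance of~\eqref{eq:am generalized,noncircular}.

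Next I would verify that $B$ is self-adjoint on $\ell(\Omega)$. The one point requiring care is that $f*g$ need not be supported on $\Omega$ even when $f$ is, which is why $B$ involves truncation; nevertheless, for $f,h\in\ell(\Omega)$ we have $\ip{Bf}{h}=\sum_{n\in\Omega}(f*g)(n)h(n)=\ip{f*g}{h}$, the latter inner product taken in $\ell^2(\Int^D)$ and legitimate because $h$ vanishes off $\Omega$. The standard identity $\ip{f*g}{h}=\ip{f}{h*\tilde g}$ with $\tilde g(n)=g(-n)$, together with the hypothesis that $g$ is even (so $\tilde g=g$), gives $\ip{Bf}{h}=\ip{f}{h*g}=\sum_{n\in\Omega}f(n)(h*g)(n)=\ip{f}{Bh}$, where the restriction to $\Omega$ in the last step is again harmless because now $f$ is the factor supported on $\Omega$. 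Here I use that a nonnegative, even Fourier series $\hat g$ forces $g$ itself to be real-valued and even.

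Finally I would check that $B$ is positive semidefinite. For $f\in\ell(\Omega)$ the same truncation-is-harmless observation gives $\ip{Bf}{f}=\ip{f*g}{f}$ in $\ell^2(\Int^D)$, and Parseval's identity on $\mathbb{T}^D$ turns this into $\int_{\mathbb{T}^D}\hat g(x)\,|\hat f(x)|^2\,\mathrm{d}x$, which is $\geq0$ since $\hat g\geq0$ almost everywhere. With $A=DB$, $D$ positive-multiplicative, $B$ self-adjoint and positive semidefinite, the second conclusion of Theorem~\ref{thm:quasi} applies to~\eqref{eq:am generalized,noncircular}, hence to~\eqref{eq:am,noncircular}, which is what we want.

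I do not expect a serious obstacle here: the analytic content has already been front-loaded into Theorems~\ref{thm:sym}, \ref{thm:fp} and~\ref{thm:quasi} and into the displayed computations preceding the corollary, so this is essentially a verification of hypotheses. The only thing one should not gloss over is the bookkeeping between the ambient spaces $\ell(\Omega)$ and $\ell^2(\Int^D)$ — namely, that the truncation defining $B$ spoils neither self-adjointness nor positivity — but as indicated this is immediate because in each relevant pairing one of the two arguments is already supported on $\Omega$.
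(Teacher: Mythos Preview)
Your proposal is correct and follows essentially the same route as the paper: factor $A=DB$ with $D$ the positive diagonal rescaling and $B$ the truncated convolution, then verify the hypotheses of Theorem~\ref{thm:quasi} via evenness of $g$ (self-adjointness) and nonnegativity of $\hat g$ (positive semidefiniteness through Parseval). Your explicit handling of the truncation bookkeeping between $\ell(\Omega)$ and $\ell^2(\Int^D)$ is a welcome clarification the paper leaves implicit, but otherwise the arguments coincide.
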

In the next section, we discuss how to construct such windows $g$, along with other implementation-related issues.

\section{Examples of Active Masks in practice}
\label{sec:experiments}

In this section we present a few representative and interesting examples of filter-based cellular automata, and discuss their behavior in relation with the results we proved in the previous sections.  We also present some preliminary experimental findings on the rate of convergence of AM.  For ease of understanding, let us for the moment restrict ourselves to circulant iterative voting~\eqref{eq:ct}, namely the version of AM~\eqref{eq:am} in which all the skew functions $R_m$ are identically zero.  The simplest nonzero filter is $g=\delta_0$.  The DFT of $\delta_0$ has constant value $1$, and is therefore nonnegative and even.  As such, Theorem~\ref{thm:suff} guarantees that~\eqref{eq:ct} will always converge.  Of course, we already knew that: since $f*\delta_0=f$ for all $f\in\ell(\Omega)$, \eqref{eq:ct} will always converge in one step; as noted above, the same holds true for any $g$ whose convolutional operator is strictly diagonally dominant with a nonnegative diagonal: $g(0)\geq\sum_{n\neq 0}\abs{g(n)}$.

More interesting examples arise from \textit{box filters}: symmetric cubes of Dirac $\delta$'s.  For instance, fix $N\geq3$ and consider~\eqref{eq:ct} over $\Omega=\Int_N$ where $g=\delta_{-1}+\delta_0+\delta_1$.  Since $g$ is symmetric, Theorem~\ref{thm:sym} guarantees that~\eqref{eq:ct} will either always converge or will enter a $2$-cycle.  However, if $N$ is even, then~\eqref{eq:ct} will not always converge, since $\psi_0=\delta_0+\delta_2+\dots+\delta_{N-2}$ 
generates a $2$-cycle.  This phenomenon is depicted in Figure~\ref{fig:Oscillating states}(a).  This simple example shows that symmetry alone does not suffice to guarantee convergence; one truly needs additional hypotheses on $g$, such as the requirement in Theorem~\ref{thm:suff} that its DFT is nonnegative.  This hypothesis does not hold for $g=\delta_{-1}+\delta_0+\delta_1$, since $(\rmF g)(n)=1+2\cos(\frac{2\pi n}N)$.  Similar issues arise in the two-dimensional setting $\Omega=\Int_{N_1}\times\Int_{N_2}$: both the $3\times 3$ box filter (Moore's automaton, see Figure~\ref{fig:Oscillating states}(b)) and the ``plus'' filter (von Neumann's automaton, see Figure~\ref{fig:Oscillating states}(c)) are symmetric, meaning their cycle lengths are either $1$ or $2$, but neither are positive semidefinite, having DFTs of $[1+2\cos(\frac{2\pi n_1}{N_1})][1+2\cos(\frac{2\pi n_2}{N_2})]$ and $1+2\cos(\frac{2\pi n_1}{N_1})+2\cos(\frac{2\pi n_2}{N_2})$, respectively.  Indeed, when $N_1$ and $N_2$ are even, alternating stripes generate a $2$-cycle for the box filter, while the checkerboard generates a $2$-cycle for the plus filter.
\begin{figure*}
\centering
\subfloat[3-tap filter automaton]{\includegraphics[width=0.65\textwidth]{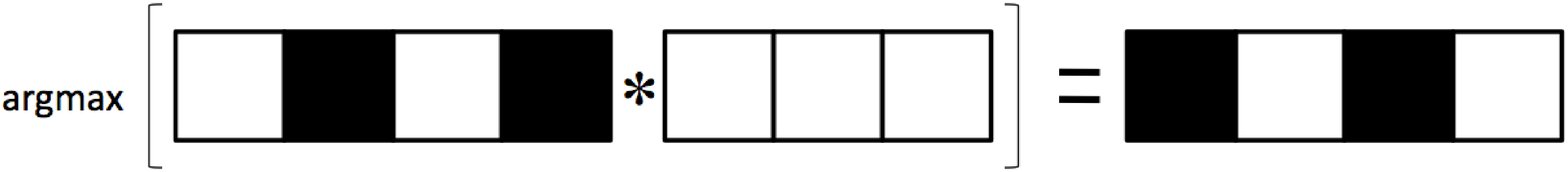}}
\linebreak{}
\subfloat[Moore's automaton]{\includegraphics[width=0.65\textwidth]{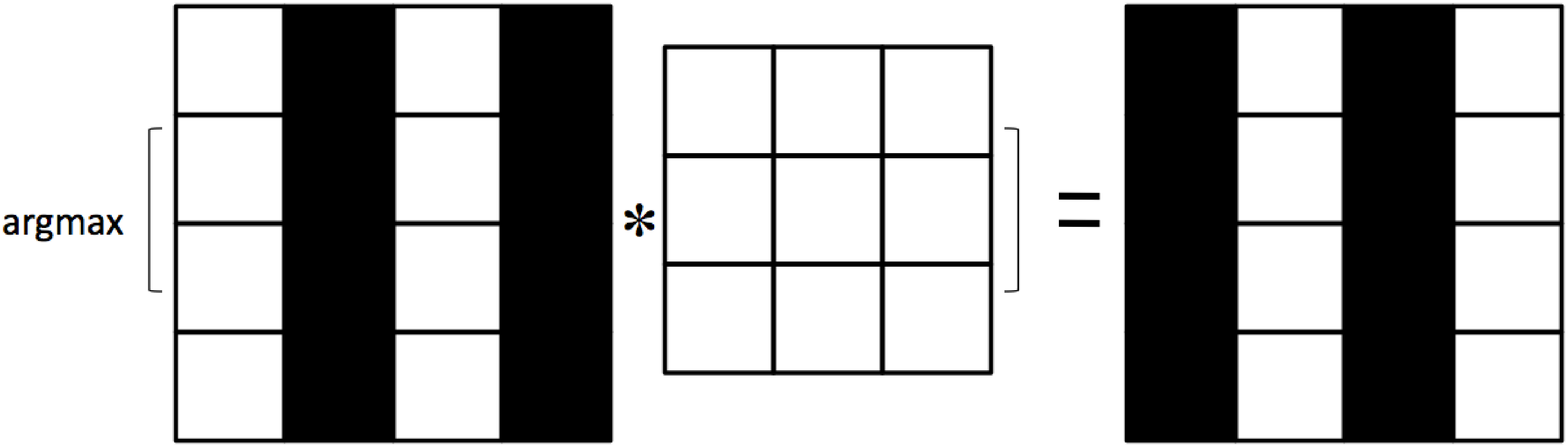}}
\linebreak{}
\subfloat[von Neumann's automaton]{\includegraphics[width=0.65\textwidth]{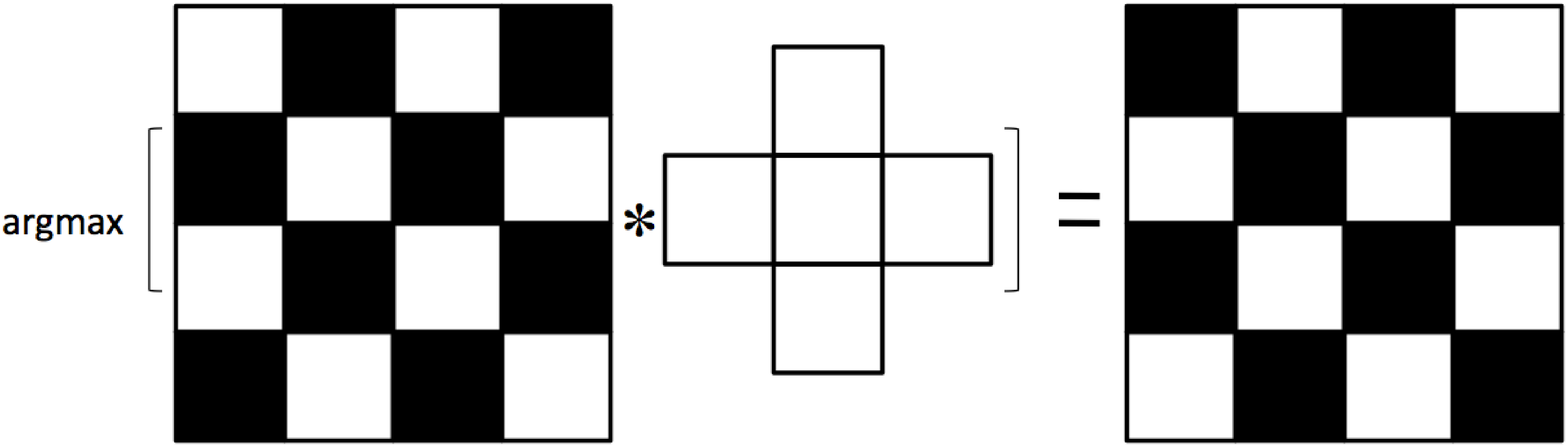}}
\caption{\label{fig:Oscillating states}\small An illustration of oscillating states produced by various automata: (a) The 3-tap box filter $\delta_{-1}+\delta_0+\delta_1$ over $\Omega=\Int_4$.  Using this $g$ in~\eqref{eq:ct} with $\psi_0=\delta_0+\delta_2$ results in the endless $2$-cycle $\delta_0+\delta_2\mapsto\delta_1+\delta_3\mapsto\delta_0+\delta_2$.  This is because at each iteration, each pixel's two neighbors will outvote him in deciding his label in the next iteration.  (b) Convergence is also an issue in two dimensions, as illustrated by Moore's automaton---a $3\times 3$ box filter---over $\Omega=\Int_4\times\Int_4$.  (c) Two-cycles persist in two dimensions even when the box filter is replaced by the smoother ``plus'' filter of von Neumann's automaton.  In all three cases, these filters are even  and so Theorem~\ref{thm:sym} ensures that the cycle length $K$ of~\eqref{eq:ct} is either $1$ or $2$.  However, none of them are positive semidefinite, as their DFTs attain negative values.  As such, the convergence guarantees of Theorem~\ref{thm:fp} do not hold.}
\end{figure*}

Of course, it is not difficult to find filters $g$ which do satisfy the hypotheses of Theorem~\ref{thm:suff}: one may simply let $g$ be the inverse DFT of any nonnegative even function.  More concrete examples, such as a discrete Gaussian over $\Int_N$, can be found using the following process.  Let $h:\Reals\rightarrow\Reals$ be an even Schwartz function whose Fourier transform is nonnegative; an example of such a function is a continuous Gaussian.  Let $g$ be the $N$-periodization of the integer samples of $h$, namely $g(n):=\sum_{n'=-\infty}^{\infty}h(n+Nn')$.  Then $g$ is even, and moreover, by the Poisson summation formula:
\begin{equation*}
(\rmF g)(n)
=\sum_{n'=0}^{N-1}g(n')\rme^{-\frac{2\pi\rmi nn'}{N}}
=\sum_{n'=0}^{N-1}\sum_{n''=-\infty}^{\infty}h(n'+Nn'')\rme^{-\frac{2\pi\rmi nn'}{N}}
=\sum_{k=-\infty}^{\infty}h(k)\rme^{-\frac{2\pi\rmi nk}{N}}
=\sum_{k=-\infty}^{\infty}\hat{h}(k+\tfrac nN)
\geq0.
\end{equation*}
In particular, if $g$ is chosen as a periodized version of the integer samples of any zero-mean Gaussian, then Theorem~\ref{thm:suff} gives that the AM algorithm~\eqref{eq:am} necessarily converges.  This construction method immediately generalizes to higher-dimensional settings where $D>1$.  It also generalizes to the noncircular convolution setting considered in Section~\ref{sec:BeyondSymmetry}.  There, we further restrict $h$ to be strictly positive, and let $g$ be the integer samples of $h$.  The positivity of $h$ implies $(\chi_{\Omega}* g)(n)>0$ for all $n\in\Omega$, implying $g\in\mathcal{G}(\Omega)$ as needed.  Moreover, $g$ is even and the Poisson summation formula gives that its Fourier series is nonnegative: $\hat{g}(x)=\sum_{k=-\infty}^{\infty}\hat{h}(k+x)\geq0$.  Any $g$ constructed in this manner satisfies the hypotheses of Corollary~\ref{cor:noncirculant}, implying the corresponding noncirculant AM~\eqref{eq:am,noncircular} necessarily converges.

\subsection{The rate of convergence of the AM algorithm}
\begin{figure*}
\centering{\includegraphics[width=0.35\textwidth]{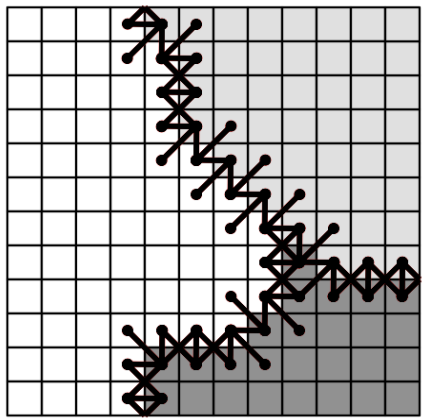}}
\caption{\label{fig:Stitches}\small An illustration of the zero-crossings in an image with $M$=3 masks.}
\end{figure*}
\begin{figure*}
\centering
\includegraphics[width=0.65\textwidth]{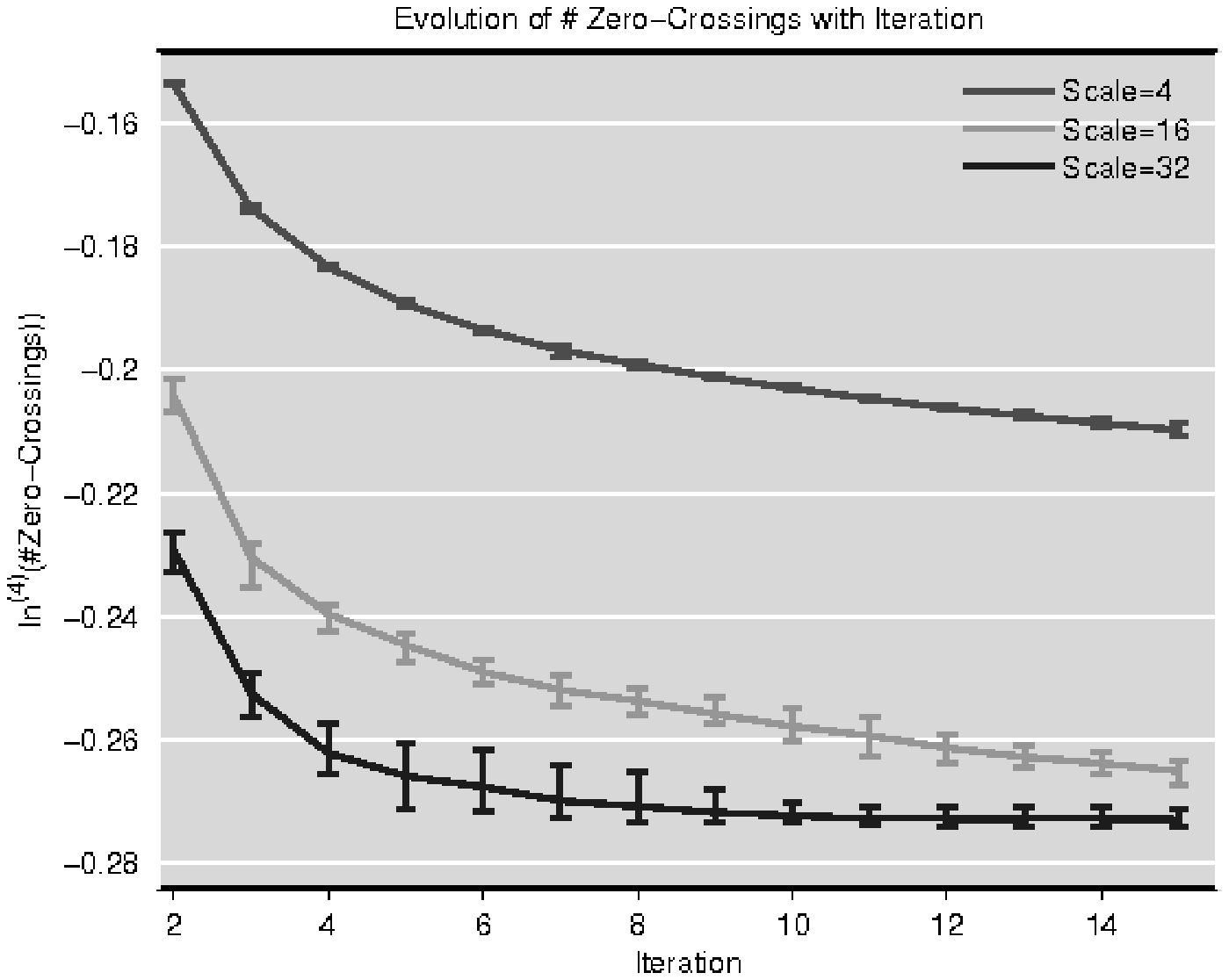}
\caption{{\small \label{fig:conv-diff-scales} The rate of decrease of the AM algorithm in terms of the number of boundary crossings.}}
\end{figure*}
Up to this point, we have focused on the question of whether or not the AM algorithm~\eqref{eq:am} converges.  Having settled that question to some degree, our focus now turns to another question of primary importance in real-world implementation: at what rate does AM converge?  Experimentation reveals that this rate highly depends on the configuration of the boundary between two distinctly labeled regions of $\Omega$.  This led us to postulate that the number of \textit{boundary crossings} (see Figure~\ref{fig:Stitches}) should monotonically decrease with each iteration.  Experimentation reveals that this number indeed often decreases extremely rapidly, regardless of the scale of $g$.  Figure~\ref{fig:conv-diff-scales} depicts such an experiment for the fluorescence microscope image shown in Figure~\ref{fig:AM}(a). Starting from a random initial configuration of 64 masks, we used a Gaussian filter under three different scales, with each plot depicting the evolution of 5 independently-initialized runs of the algorithm.  We emphasize the algorithm's fast rate of convergence: the vertical axis represents a nested four-fold application of the natural logarithm to the number of boundary crossings.  We leave a more rigorous investigation of the AM algorithm's rate of convergence for future work.

\section*{Acknowledgments}
We thank Prof.~Adam D.~Linstedt and Dr. Yusong Guo for providing the biological images which were the original inspiration for the AM algorithm and this work.
Fickus and Kova\v{c}evi\'{c} were jointly supported by NSF CCF 1017278.  Fickus received additional support from NSF DMS 1042701 and AFOSR F1ATA00183G003, F1ATA00083G004 and F1ATA0035J001. Kova\v{c}evi\'{c} also received  support from NIH R03-EB008870. The views expressed in this article are those of the authors and do not reflect the official policy or position of the United States Air Force, Department of Defense, or the U.S.~Government.

\bibliographystyle{plain}
\bibliography{am,bibl_jelena}
\end{document}